\documentclass[oneside]{amsart}

\usepackage{amsmath}
\usepackage{amssymb}
\usepackage[a4paper]{geometry}
\usepackage{amsthm}

\newtheorem{theorem}{Theorem}
\newtheorem{proposition}{Proposition}

\newtheorem{definition}{Definition}
\newtheorem{corollary}{Corollary}
\theoremstyle{remark}
\newtheorem{remark}{Remark}

\DeclareMathOperator{\ran}{ran}


\begin{document}
\title[Some remarks on Hilbert representations of posets]
    {Some remarks on Hilbert representations of posets}

\author{V. Ostrovskyi}
\address{Institute of Mathematics, Tereshchenkivska str. 3, Kiev, Ukraine}
\curraddr{} \email{vo@imath.kiev.ua}
\thanks{Supported by Theme no. 20 "Evolution and spectral
problems of modern mathematical physics" of the program
"Mathematical modeling of physical and mechanical processes in a
highly inhomogeneous environment"}

\author{S. Rabanovich}
\address{Institute of Mathematics, Tereshchenkivska str. 3, Kiev, Ukraine}
\email{slavik@imath.kiev.ua}
\thanks{}
\dedicatory{To 70-th birthday of our Teacher Yu.S.Samoilenko}

\subjclass[2000]{Primary 15A24; Secondary 16G20, 47A62.}
\date{DD/MM/2010}
\keywords{representations of posets,orthoscalar representations}

\begin{abstract}
For a certain class of finite posets, we prove that all their irreducible orthoscalar representations are finite-dimensional and describe those, for which there exist essential (non-degenerate) irreducible orthoscalar representations.
\end{abstract}

\maketitle

\section{Introduction}
Let $S$ be a finite partially ordered set (poset). A
representation of $S$ in a linear space $V$ is a collection of
subspaces $V_g$, $g\in S$, for which $V_g\subset V_h$ as $g<h$,
and one consider the representations $V_g$ in $V$  and $V_g'$ in
$V'$ to be equivalent if there exists an invertible operator
$T\colon V\to V'$, such that $V_g=TV_g$, $g\in S$. Representations
of posets have been extensively studied by A.V.Roiter and his
collaborators (see \cite{naz-roi,kleiner} and others), in
particular, classes of posets of finite type, tame type and wild
type were described.

In the case of a Hilbert space $H$, a representation of $S$ is a
collection of closed subspaces $H_g$, $g\in S$, for which
$H_g\subset H_h$ as $g<h$, and they are studied up to a unitary
equivalence: representations $H_g$ in $H$  and $H_g'$ in $H'$ are
equivalent if there exists a \emph{unitary} operator $U\colon H\to
H'$, such that $H_g=UH_g$, $g\in S$. It appeared that posets of
(Hilbert) tame type ($*$-tame type) have very simple structure
\cite{kostya_brasil}: they are chains or semichains. In
Section~\ref{sec:tame} we provide a short description of them
(Section~\ref{sec:tame_desc}). Also, in Section~\ref{sec:onepar}
we introduce and describe the class of unitarily one-parameter
poset (for them, the continuous series of irreducible
representations naturally depend on a single parameter) and
calculate the spectrum of a linear combination of the
corresponding projections (Section \ref{sec:spectrum}).

On the other hand, it was discovered in several recent papers (\cite{krs_2002,kru_roi,alb_ost_sam} and others) that in the case of primitive posets, an additional condition of \emph{orthoscalarity} (see Section~\ref{sec:ortho} for the  definition of orthoscalar representations) leads to the results very similar to the ones in a linear representations theory. Moreover, it was shown in \cite{kostya_uni,kostya_uni2} that this similarity can be extended to some cases of non-primitive posets by using unitarization technique: there exists a correspondence between classes of linear representations in $V$ and orthoscalar representations in $H$.

Therefore, it is still a problem to develop results on orthocalar representations of posets in the non-primitive case. In this paper, we study orthoscalar representations of the class of posets which can be decomposed into a union of two unitarily one-parameter posets. We start with the simplest example of the primitive $(1,1,1,1)$ poset (Section~\ref{sec:four}), orthoscalar representations of which are four-tuples of projections in $H$ whose linear combination is a scalar operator. Here we summarize some results of \cite{ost-sam99, kyrych, yusenko-four}, and give explicit formulas for representations similar to the ones established in \cite{ost-sam06}. The main result is that any irreducible orthoscalar representation of any poset which is a union of two one-parameter posets, is finite-dimensional (Section~\ref{sec:main}).

In Section~\ref{sec:examples} we consider examples of such posets
and their orthoscalar representations.

\section{Posets of $*$-tame type}\label{sec:tame}
\subsection{Description of $*$-tame posets}\label{sec:tame_desc}
Recall that given a finite poset $S$, its (Hilbert) representation is a collection of subspaces $S\ni g\mapsto H_g$ of closed subspaces of some Hilbert space $H$, such that $H_g\subset H_h$ for $g<h$, $g,h\in S$. Obviously, each subspace $H_g$ is uniquely determined by an orthogonal projection $P_g$ onto $H_g$, $g\in S$, therefore,  representations of a poset $S$ are described by representations of a $*$-algebra generated by projections $P_g$, $g\in S$, such that $P_gP_h=P_g$, $g<h$, and vice versa. Notions of indecomposable, irreducible representations and unitary equivalence of representations are standard for representations of $*$-algebras and thus can be applied to Hilbert representations of posets as well.

For Hilbert representations of posets,  it is well-known that
$(1)$ and $(1,1)$ are posets of tame type, and the posets $(1,2)$
and $(1,1,1)$ are of $*$-wild type.

\begin{proposition}
 A poset $S$ is of tame type if and only if its width is 1 or 2 and $S$ does not contain the $(1,2)$ poset.
\end{proposition}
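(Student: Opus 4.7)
The plan is to prove both implications using the principle that a poset containing a $*$-wild subposet is itself $*$-wild (equivalently, every subposet of a $*$-tame poset is $*$-tame). For necessity, if $S$ had width at least $3$, it would contain the antichain $(1,1,1)$, which is $*$-wild by the remark preceding the proposition, forcing $S$ to be $*$-wild; and if $S$ contained $(1,2)$ it would likewise fail to be tame. Both conditions are therefore necessary.

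For sufficiency, I would first extract the combinatorial structure of the remaining posets. A width-$1$ poset is a chain, whose Hilbert representations are direct sums of one-dimensional blocks via the spectral theorem applied to the nested projections. When the width is $2$ and $(1,2)$ does not embed, a short case analysis yields: for any incomparable pair $a,b$ and any third element $c$, the element $c$ must be comparable with both $a$ and $b$ (otherwise $\{a,b,c\}$ realizes $(1,2)$), and it must lie either above both or below both (since $c>a$ and $c<b$ would give $a<b$). Iterating, $S$ decomposes into a totally ordered sequence of ``levels'', each level a singleton or a two-element antichain, with every element of a higher level above every element of a lower --- the semichain structure recalled in the introduction.

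Tameness of a semichain is then proved by induction on the number of levels, with Halmos' classification of a pair of orthogonal projections as the atomic building block: its irreducibles are at most two-dimensional and depend on a single angular parameter. A single-element level adds one more projection comparable with all those already present, handled by the spectral theorem on each already-constructed block; a double level inserted between two fixed subspaces reduces, after passing to the quotient of those subspaces, to the Halmos two-projection problem.

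The main obstacle is this inductive gluing: one must verify that the continuous parameters coming from different double levels do not combine into a genuinely multi-parameter family, which would indicate wild type. Making the decoupling rigorous is the technical heart of the argument, and in a clean write-up it is where one would either carry out detailed bookkeeping across the level filtration or simply invoke the structure theorem of \cite{kostya_brasil}.
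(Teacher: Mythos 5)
Your necessity argument and your combinatorial reduction of the width-$2$ case to a ``levels'' decomposition (each level a point or a two-element antichain, every higher level above every lower one) coincide with the paper's. The genuine gap is exactly the step you flag yourself: you never prove that the semichain is tame, i.e.\ that the angular parameters coming from different width-$2$ levels cannot combine into a multi-parameter family. Leaving this as ``detailed bookkeeping'' is not a proof, and your fallback of invoking the structure theorem of \cite{kostya_brasil} is essentially circular, since that theorem is (up to phrasing) the statement being established. So as written, the sufficiency direction for width $2$ is incomplete.

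The missing observation, which is how the paper closes this in one line, is that any two projections attached to comparable elements commute ($P_gP_h=P_g$ for $g<h$), and in the semichain \emph{every} cross-level pair is comparable; hence the subalgebras generated by the individual levels pairwise commute, and the algebra of the whole poset is a quotient of the tensor product of the level algebras (the paper phrases this as ``any representation of $S$ is a tensor product of representations of the $S_i$''). Each level algebra is generated either by one projection or by a Halmos pair, so it is type I with irreducibles of dimension at most $2$ and a single angular parameter. In an irreducible representation of the whole poset the cross-level order relations then force all levels except at most one to act by $0$'s and $I$'s, so at most one continuous parameter survives --- precisely the decoupling you were worried about. Your Halmos-based induction can be made to work, but only after this commutation/tensor-product fact is put in; without it the inductive gluing you describe has no mechanism to rule out multi-parameter families, and the proof does not go through.
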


\begin{proof}
 If $S$ contains the $(1,2)$ poset, it is evidently of $*$-wild type . If the width of $S$ is 3 or more, it contains the $(1,1,1)$ poset and is again of $*$-wild type \cite{ost-sam99}.

It $S$ is of width $1$, the corresponding algebra is commutative, therefore $S$ is of tame type.

Let $S$ be of width $2$ and does not contain $(1,2)$. It is easy
to see that in this case $S = S_1\cup\dots\cup S_m$ such that each
$S_i$ is either $(1)$ or $(1,1)$, $i=1$, \dots, $m$, and
$S_j>S_k$, $j<k$. The latter means that and for any $f\in S_j$,
$g\in S_k$ we have $g<f$. But in this case any representation of
$S$ is a tensor product of representations of $S_i$, $i=1$, \dots,
$m$.
\end{proof}

It is easy to see that any irreducible representation of a poset
of $*$-tame type is one- or two-dimensional (see, e.g.,
\cite{kostya_brasil}), moreover, two-dimensional irreducible
representations exist if and only if the poset is of width 2. As
noticed above, any poset $S$ of tame type can be represented as
\begin{equation}\label{tame_decompos}
 S = S_1\cup\dots\cup S_m,
\end{equation}
such that each $S_i$ is either $(1)$ or $(1,1)$, $i=1$, \dots,
$m$, and $S_j>S_k$, $j<k$.

\subsection{One-parameter posets}\label{sec:onepar}
We introduce the class of one-parameter posets. For these posets, the set of irreducible representaions consists of a finite number of one-dimesional and a one-parameter continuous family of two-dimensional representations.

\begin{definition}
 We say that $S$ is a (unitarily) one-parameter poset if it is of tame type and in its decomposition \eqref{tame_decompos} exactly one set $S_i$ is of width 2.
\end{definition}

In other words, unitarily one-parameter posets are those having the following Hasse diagrams:
\[
\vrule height 12mm depth 12mm width 0mm
\setlength{\unitlength}{0.5mm}
\begin{picture}(12,0)(0,0)
  \put(0,0){\circle*{2}}
  \put(10,0){\circle*{2}}
\end{picture}
,\quad
\begin{picture}(12,10)(0,10)
  \put(5,0){\circle*{2}}
  \put(10,10){\circle*{2}}
  \put(0,10){\circle*{2}}
  \put(5,0){\line(-1,2){5}}
  \put(5,0){\line(1,2){5}}
\end{picture}
,\quad
\begin{picture}(12,10)(0,10)
  \put(5,-12){\circle*{2}}
  \put(5,-4){\makebox(0,0)[cc]{$\vdots$}}
  \put(5,0){\circle*{2}}
  \put(0,10){\circle*{2}}
  \put(10,10){\circle*{2}}
  \put(5,0){\line(1,2){5}}
  \put(5,0){\line(-1,2){5}}
\end{picture}
,\quad
\begin{picture}(12,0)(0,0)
  \put(5,10){\circle*{2}}
  \put(0,0){\circle*{2}}
  \put(10,0){\circle*{2}}
  \put(5,10){\line(1,-2){5}}
  \put(5,10){\line(-1,-2){5}}
\end{picture}
,\quad
\begin{picture}(12,0)(0,-10)
  \put(5,12){\circle*{2}}
  \put(5,8){\makebox(0,0)[cc]{$\vdots$}}
  \put(5,0){\circle*{2}}
  \put(0,-10){\circle*{2}}
  \put(10,-10){\circle*{2}}
  \put(5,0){\line(-1,-2){5}}
  \put(5,0){\line(1,-2){5}}
\end{picture}
,\quad
\begin{picture}(12,0)(0,0)
  \put(0,0){\circle*{2}}
  \put(10,0){\circle*{2}}
  \put(5,10){\circle*{2}}
  \put(5,-10){\circle*{2}}
  \put(5,10){\line(-1,-2){5}}
  \put(5,10){\line(1,-2){5}}
  \put(5,-10){\line(1,2){5}}
  \put(5,-10){\line(-1,2){5}}
\end{picture}
,\quad
\begin{picture}(12,0)(0,0)
  \put(5,-14){\makebox(0,0)[cc]{$\vdots$}}
  \put(0,0){\circle*{2}}
  \put(10,0){\circle*{2}}
  \put(5,10){\circle*{2}}
  \put(5,-10){\circle*{2}}
  \put(5,-22){\circle*{2}}
  \put(0,0){\line(1,2){5}}
  \put(0,0){\line(1,-2){5}}
  \put(10,0){\line(-1,2){5}}
  \put(10,0){\line(-1,-2){5}}
\end{picture}
,\quad
\begin{picture}(12,0)(0,0)
  \put(5,18){\makebox(0,0)[cc]{$\vdots$}}
  \put(0,0){\circle*{2}}
  \put(10,0){\circle*{2}}
  \put(5,10){\circle*{2}}
  \put(5,22){\circle*{2}}
  \put(5,-10){\circle*{2}}
  \put(0,0){\line(1,2){5}}
  \put(0,0){\line(1,-2){5}}
  \put(10,0){\line(-1,2){5}}
  \put(10,0){\line(-1,-2){5}}
\end{picture}
,\quad
\begin{picture}(12,0)(0,0)
  \put(5,18){\makebox(0,0)[cc]{$\vdots$}}
  \put(5,-14){\makebox(0,0)[cc]{$\vdots$}}
  \put(0,0){\circle*{2}}
  \put(10,0){\circle*{2}}
  \put(5,10){\circle*{2}}
  \put(5,-10){\circle*{2}}
  \put(5,22){\circle*{2}}
  \put(5,-22){\circle*{2}}
  \put(0,0){\line(1,2){5}}
  \put(0,0){\line(1,-2){5}}
  \put(10,0){\line(-1,2){5}}
  \put(10,0){\line(-1,-2){5}}
\end{picture}
.
\]

\begin{remark}
 In the theory of linear (non-unitary) representations of poset, the class of one-parameter posets is defined in other terms and differs from the one introduced above. Below, speaking of one-parameter posets we mean unitarily one-parameter posets unless specified explicitly.
\end{remark}

The description of irreducible representations given in \cite{kostya_brasil} in the case of one-parameter posets can be specified as follows.

Let $S$ be a one-parameter poset, and let $k$ be the unique index, for which $S_k$ in \eqref{tame_decompos} is of width 2. Then $S_j$, $j\ne k$ consists of a single element $g_j$, while $S_k$ consists of two elements $g_{k,1}$,  $g_{k,2}$.

\begin{proposition}\label{prop:reps-noos}
Any irreducible representation of $S \ni g\mapsto P_g$ has dimension one or two. There exists a finite number of one-dimensional irreducible representations, $P_g=p_g\in\{0,1\}$, where
\[
 p_1\le\dots\le p_{k-1}\le \frac{p_{k,1}+p_{k,2}}2\le p_{k+1}\le\dots\le p_m,
\]
and a one-parameter family of two-dimensional irreducible
non-equivalent representations,
\begin{gather*}
 P_1=\dots =P_{k-1}=0,
\\
 P_{k,1} = \begin{pmatrix}1&0\\0&0\end{pmatrix},\quad
 P_{k,2}= \begin{pmatrix}\tau&\sqrt{\tau(1-\tau)}\\ \sqrt{\tau(1-\tau)}&1-\tau\end{pmatrix},\qquad 0<\tau<1,
\\
 \quad P_{k+1}=\dots=P_m=I.
\end{gather*}
\end{proposition}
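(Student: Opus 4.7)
My plan is to reduce the classification to the classical description of the $*$-algebra generated by two orthogonal projections. The crucial observation is that comparable projections commute: if $P, Q$ are orthogonal projections with $\ran P \subset \ran Q$, then $QP = P = PQ$. In the decomposition \eqref{tame_decompos} every element of $S_j$ is comparable to every element of $S_{j'}$ whenever $j \neq j'$, so each $P_{g_j}$ with $j \neq k$ commutes with every other $P_g$, $g \in S$. Hence in an irreducible representation the projections $P_{g_j}$, $j \neq k$, are scalar, and being projections they take values $p_j \in \{0,1\}$. The poset relations then give the chains $p_1 \leq \dots \leq p_{k-1}$ and $p_{k+1} \leq \dots \leq p_m$ together with the operator inequalities $p_{k-1} I \leq P_{k,i} \leq p_{k+1} I$ for $i = 1, 2$.

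If $p_{k-1} = 1$ then both $P_{k,i} = I$ and consequently $p_j = 1$ for all $j \geq k+1$ as well; symmetrically, if $p_{k+1} = 0$ then all $P_{k,i}$ and all preceding scalars vanish. In either case every $P_g$ is scalar, so irreducibility forces $\dim H = 1$. A direct case check on $\{0,1\}$-valued entries shows that the stated averaged inequality $p_{k-1} \leq \frac{p_{k,1}+p_{k,2}}{2} \leq p_{k+1}$ is equivalent to $p_{k-1} \leq p_{k,i} \leq p_{k+1}$ for $i=1,2$, so these are exactly the one-dimensional irreducibles listed.

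The substantive case is $p_{k-1} = 0$ and $p_{k+1} = 1$, which forces $p_1 = \dots = p_{k-1} = 0$ and $p_{k+1} = \dots = p_m = 1$, while $P_{k,1}, P_{k,2}$ are arbitrary projections on $H$. Irreducibility of the whole representation then reduces to irreducibility of the $*$-algebra generated by these two projections, and I would invoke the classical Halmos two-projections theorem: up to unitary equivalence, every irreducible representation of this algebra is one- or two-dimensional, with the two-dimensional ones forming a one-parameter family. In dimension two each $P_{k,i}$ is necessarily of rank one with ranges that are neither equal nor orthogonal (otherwise the two projections would commute and the representation would decompose). Diagonalizing $P_{k,1}$ in an orthonormal basis and absorbing the remaining phase freedom in the second basis vector brings $P_{k,2}$ to the stated matrix form, with $\tau \in (0,1)$ measuring the squared cosine of the angle between the ranges of $P_{k,1}$ and $P_{k,2}$; distinct values of $\tau$ yield inequivalent representations, while the boundary values $\tau \in \{0,1\}$ are excluded precisely because they correspond to commuting $P_{k,i}$.

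The main technical hurdle is the appeal to Halmos' theorem together with the bookkeeping of unitary equivalence that produces the specific normal form of $P_{k,2}$; the rest is a routine translation between poset relations and inequalities of projections.
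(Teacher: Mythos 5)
Your argument is correct. Note that the paper itself gives no proof of this proposition: it is presented as a specialization of the description in \cite{kostya_brasil}, with the only structural hint being the remark in the proof of Proposition 1 that a representation of $S$ is built from representations of the blocks $S_i$ of the decomposition \eqref{tame_decompos}. Your route is the natural self-contained one and rests on the same two facts: (a) since every $S_j$ with $j\ne k$ is a singleton comparable to all other elements, the projections $P_{g_j}$, $j\ne k$, have ranges nested with all other ranges, hence commute with the whole family, lie in the commutant, and by irreducibility equal $0$ or $I$; and (b) the residual classification is that of an irreducible pair of projections, where the two-projections (Halmos) theorem gives exactly the one-dimensional representations plus the one-parameter family in the stated normal form, with $\tau\in(0,1)$ a unitary invariant (e.g.\ $\operatorname{tr}(P_{k,1}P_{k,2})=\tau$), so distinct $\tau$ give inequivalent representations. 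Your case check that the averaged inequality $p_{k-1}\le\frac{p_{k,1}+p_{k,2}}{2}\le p_{k+1}$ is, for $\{0,1\}$-valued entries, equivalent to the entrywise inequalities is right, and it correctly accounts for the one-dimensional list. The only caveat is notational: the decomposition in the paper is written with $S_j>S_k$ for $j<k$, which is opposite to the monotonicity convention used in the statement of the proposition (where $P_m=I$ in the two-dimensional series); you silently adopt the proposition's convention, which is the intended reading, but it is worth flagging so the chain of inequalities is not misread against the earlier convention.
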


\subsection{Linear combinations of projections}\label{sec:spectrum}
The latter proposition enables one to obtain in a standard way  a
spectral decomposition of arbitrary (reducible) representation
into a direct sum or integral of irreducible ones. We use such a
decomposition to describe the spectrum of operator \(
 \sum_{g_s \in S}\alpha_s P_s,
\)
which will be used below.

\begin{proposition}\label{spectrum}
Let $S\ni g \mapsto P_g$ be a Hilbert representation of a one-parameter poset $S$. Denote
\[
 \sum_{g_s \in S}\alpha_s P_s.
\]
Then
\begin{align*}
 \sigma (A)&\subset \Delta = \Delta_d \cup \frac{\Sigma\pm (|\alpha_{k,1}-\alpha_{k,2}|,\alpha_{k,1}+\alpha_{k,2})}2 ,
\end{align*}
where
\begin{align}
\Delta_d& = \{0, \alpha_m, \alpha_{m-1}+\alpha_m,\dots, \alpha_{k+1}+\dots+\alpha_m,
\\
&\alpha_{k,1}+\alpha_{k+1}+\dots+\alpha_m, \alpha_{k,2}+\alpha_{k+1}+\dots+\alpha_m,\alpha_{k,1}+\alpha_{k,2}+\alpha_{k+1}+\dots+\alpha_m,
\notag
\\
& \alpha_{k-1}+\alpha_{k,1}+\alpha_{k,2}+\alpha_{k+1}+\dots+\alpha_m, \dots
\notag
\\
&\alpha_1+\dots +\alpha_{k-1}+\alpha_{k,1}+\alpha_{k,2}+\alpha_{k+1}+\dots+\alpha_m\}
\notag
\\
\Sigma&=\alpha_{k,1}+\alpha_{k,2} + 2 \sum_{j>k}\alpha_j.
\label{eq:sigma}
\end{align}
Moreover, the parts of the spectrum in the continuous area
corresponding to the plus and minus signs have the same type and
multiplicity. In particular, the number $(\Sigma + \lambda)/2$,
$|\alpha_{k,1}-\alpha_{k,2}|<\lambda <\alpha_{k,1}+\alpha_{k,2}$,
is an eigenvalue if and only if $(\Sigma - \lambda)/2$ is an
eigenvalue of the same multiplicity.
\end{proposition}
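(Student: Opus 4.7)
The plan is to invoke Proposition~\ref{prop:reps-noos} to decompose the representation into irreducibles and then compute the action of $A$ on each irreducible component. Because the projections $P_g$ generate an abelian $*$-algebra on each invariant subspace of every irreducible subrepresentation and $A$ lies in this algebra, the spectral theorem for the representation yields a decomposition $H = H_d \oplus H_c$, where $H_d$ is a direct sum of one-dimensional irreps (each described by an admissible $\{0,1\}$-tuple $(p_s)$ satisfying the inequalities of Proposition~\ref{prop:reps-noos}) and $H_c$ is a direct integral, with respect to some spectral measure on $(0,1)$, of two-dimensional irreps in the normal form of that proposition. Since $A$ respects this decomposition, $\sigma(A)$ is the union of the spectra on the two pieces.

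On a one-dimensional irrep, $A$ acts as the scalar $\sum_{s}\alpha_s p_s$. The admissible tuples are monotone in $s$ with a single possible jump at the width-$2$ index $k$ (where the pair $(p_{k,1},p_{k,2})$ may take any of the values $(0,0),(0,1),(1,0),(1,1)$ compatible with the inequality), so I would simply enumerate where the jump occurs; this produces precisely the list of numbers comprising $\Delta_d$.

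For a two-dimensional irrep with parameter $\tau\in(0,1)$, Proposition~\ref{prop:reps-noos} gives $P_j=0$ for $j<k$ and $P_j=I$ for $j>k$, so
\[
A \;=\; \alpha_{k,1} P_{k,1} + \alpha_{k,2} P_{k,2} + \Bigl(\sum_{j>k}\alpha_j\Bigr) I.
\]
A direct computation of the trace and determinant of the $2\times 2$ block $\alpha_{k,1}P_{k,1}+\alpha_{k,2}P_{k,2}$ (trace $\alpha_{k,1}+\alpha_{k,2}$, determinant $\alpha_{k,1}\alpha_{k,2}(1-\tau)$) gives eigenvalues
\[
\lambda_\pm(\tau) \;=\; \frac{\Sigma \pm \sqrt{(\alpha_{k,1}-\alpha_{k,2})^2 + 4\alpha_{k,1}\alpha_{k,2}\tau}}{2}.
\]
As $\tau$ runs through $(0,1)$, the discriminant sweeps continuously over $\bigl((\alpha_{k,1}-\alpha_{k,2})^2,\,(\alpha_{k,1}+\alpha_{k,2})^2\bigr)$, which yields exactly the continuous part of $\Delta$.

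The symmetry statement will follow from the observation that $\lambda_+(\tau)$ and $\lambda_-(\tau)$ are reflections of each other through $\Sigma/2$. Under the direct-integral decomposition, the spectral measures on the two branches are the pushforwards of the base measure on $(0,1)$ by $\tau \mapsto \lambda_\pm(\tau)$; these two pushforwards are mirror images through $\Sigma/2$, so isolated eigenvalues and their multiplicities (and also the absolutely continuous and singular parts) match in pairs. I do not expect any genuine analytic obstacle; the only real work is the bookkeeping needed to verify that the enumeration of $\Delta_d$ exhausts the admissible tuples and to articulate the transfer of spectral type through the reflection.
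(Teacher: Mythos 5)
Your proposal is correct and follows essentially the same route as the paper: decompose the representation into one- and two-dimensional irreducibles via Proposition~\ref{prop:reps-noos}, read $\Delta_d$ off the one-dimensional ones, and compute on each two-dimensional block the symmetric pair of eigenvalues $\tfrac12\bigl(\Sigma\pm\sqrt{(\alpha_{k,1}-\alpha_{k,2})^2+4\alpha_{k,1}\alpha_{k,2}\tau}\bigr)$, whose pairing within each block yields the multiplicity symmetry. (Your passing remark that the projections generate an abelian $*$-algebra on the two-dimensional irreducible blocks is inaccurate --- there they generate all of $M_2(\mathbb C)$ --- but this plays no role in the argument.)
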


\begin{proof}
In the case of two-dimensional irreducible representation, by
routine calculations we obtain that the spectrum  $\sigma
(\sum_{g_s\in S}\alpha_s P_s)$ consists of two points,
\[
\lambda = \frac{(\alpha_{k,1} + \alpha_{k,2}) \pm \sqrt{\alpha_{k,1}^2 + \alpha_{k,2}^2 +2\alpha_{k,1}\alpha_{k,2}( 2\tau -1 )}}2 + \sum_{j>k} \alpha_j.
\]
Since the parameter $\tau$ can take arbitrary value in $(0,1)$, in the general case these representaions  give two symmetric with respect to the point $ \frac{(\alpha_{k,1} + \alpha_{k,2}) }2 + \sum_{j>k} \alpha_j$ segments,
\[
 \frac12 \Bigl((\alpha_{k,1} + \alpha_{k,2}) \pm (|\alpha_{k,1} - \alpha_{k,2}|,\alpha_{k,1} + \alpha_{k,2})\Bigr) + \sum_{j>k} \alpha_j.
\]
The rest of the possible points of $\sigma (\sum_{g_s \in
S}\alpha_s P_s)$ arise from one-dimensional representations.
\end{proof}


Given $\lambda\in \sigma(A)$, $\lambda\in(\Sigma\pm
(|\alpha_{k,1}-\alpha_{k,2}|,\alpha_{k,1}+\alpha_{k,2}))/2$, we
can restore the corresponding projections. Indeed, let
\[
\tau =
\frac{(\sum_{j>k} \alpha_j + \alpha_{k,1}-\lambda)(\sum_{j>k} \alpha_j + \alpha_{k,2}-\lambda) }{\alpha_{k,1}\alpha_{k,2}}.
\]
If $\lambda\in \sigma(A)$ is an eigenvalue corresponding to the
continuous part of $\Delta$, then in the corresponding eigenbasis
of $A$, we have
\[
 P_{k,1}=\frac12\begin{pmatrix}1+\epsilon_1&-\sqrt{1-\epsilon_1^2}\\-\sqrt{1-\epsilon_1^2}&1-\epsilon_1\end{pmatrix}, \quad
 P_{k,2}=\frac12\begin{pmatrix}1+\epsilon_2&\sqrt{1-\epsilon_2^2}\\ \sqrt{1-\epsilon_2^2}&1-\epsilon_2\end{pmatrix}, \quad
\]
where
after routine calculations,
\[
   \epsilon_1= \frac{ 2 \mu^2-(2\mu- \alpha_{k,1})(\alpha_{k,1}+\alpha_{k,2})   }
{ \alpha_{k,1}(2\mu -\alpha_{k,1}- \alpha_{k,2})  },\quad
   \epsilon_2= \frac{ 2 \mu^2-(2\mu- \alpha_{k,2})(\alpha_{k,1}+\alpha_{k,2})   }
{ \alpha_{k,2}(2\mu -\alpha_{k,1}- \alpha_{k,2})  }.
\]
Here $\mu =\lambda - \sum_{j>k} \alpha_j $.

\section{Orthoscalar representations of finite posets}

\subsection{Definition of orthoscalarity}\label{sec:ortho}
Let $S$ be a finite poset, $S\ni g \mapsto P_g$ be a collection of orthoprojections which form its representation, i.e. $P_gP_h=P_g$, $g<h$.

We use the term \emph{character} for a positive function on $S$, $S\ni g \mapsto \alpha_g >0$.
\begin{definition}
  We say that a representation $S\ni g \mapsto P_g$,  $P_gP_h=P_g$, $g<h$, is orthoscalar with a character $\alpha =(\alpha_g)_{g\in S}$, if
\[
 \sum_{g\in S}\alpha_g P_g =I.
\]
\end{definition}

Orthoscalar representations of primitive posets (in terms of orthoscalar representations of graphs or quivers) were studied in \cite{krs_2002,kru_roi,alb_ost_sam} and other papers, some results for the non-primitive case are obtained in \cite{kostya_uni,kostya_uni2} and others.

Notice the following simple properties of orthoscalar representations.

1. If $\sum_{g\in S} \alpha_g <1$, there are no representations.

2. If $\sum_{g\in S} \alpha_g =1$, then all $P_g=I$, $g\in S$.

3. If $\alpha_g>1$ for some $g\in S$, then $P_g=0$.

4. If $\alpha_g=1$ for some $g\in S$, then in any irreducible representation either $P_g=0$ or $P_g=I$.

To exclude these degenerated cases, in what follows we assume that $0<\alpha_g<1$, $g\in S$, and $\sum_{g\in S} \alpha_g >1$.

In this paper we study the class of finite posets $S$, such that $S=S_1\cup S_2$, where $S_1$ and $S_2$ are unitarily one-parameter posets of tame type. We admit that some elements of $S_1$ can be comparable with some elements of $S_2$, however, we do not use such relations, and they should be taken into account to narrow the result obtained without them.

\subsection{Orthoscalar four-tuples of projections}\label{sec:four}
The simplest case of poset of such kind is the $(1,1,1,1)$ poset --- primitive poset, consisting of four elements, any two of which are non-comparable. An orthoscalar representation of this poset with character $\alpha=(\alpha_1,\alpha_2,\alpha_3,\alpha_4)$ is a four-tuples of projections, $P_1$, \dots, $P_4$, in some Hilbert space $H$, for which
\begin{equation}\label{eq:4tuple}
 \alpha_1P_1+\alpha_2P_4 +\alpha_3P_3 +\alpha_4P_4=I.
\end{equation}
Such four-tuples have been studied in \cite{ost-sam99, ost-sam06,kyrych,yusenko-four} and others. In particular, the following theorem has been proved (see also \cite{ost05}).

\begin{theorem}
 Any orthoscalar irreducible four-tuple of projections is finite-dimensional.
\end{theorem}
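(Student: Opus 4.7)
The approach I would take is based on analyzing the self-adjoint operator $A := \alpha_1 P_1 + \alpha_2 P_2$, which by \eqref{eq:4tuple} simultaneously equals $I - \alpha_3 P_3 - \alpha_4 P_4$. This dual description of $A$---as a positive combination of the pair $(P_1,P_2)$ on one hand, and as $I$ minus a positive combination of the pair $(P_3,P_4)$ on the other---is what should force finite-dimensionality in the irreducible case, because each side individually constrains $\sigma(A)$ to very specific intervals in the spirit of Proposition~\ref{spectrum}.

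First I would reduce to the generic case by splitting off any joint eigensubspaces of $P_1,P_2$ (or of $P_3,P_4$) on which one of the projections is $0$ or $I$; under the running assumption $0<\alpha_g<1$ and by the standing properties of orthoscalar representations, such pieces give at most one-dimensional summands and in an irreducible four-tuple of dimension greater than two they are trivial. On the remaining (generic) part I would invoke Halmos' two-projection theorem for $(P_1,P_2)$: one identifies $H\cong K\oplus K$ for some auxiliary Hilbert space $K$, and finds a positive contraction $C$ on $K$ with $0<C<I$ and $S:=\sqrt{I-C^2}$ such that
\[
 P_1=\begin{pmatrix}I&0\\0&0\end{pmatrix},\qquad P_2=\begin{pmatrix}C^2&CS\\CS&S^2\end{pmatrix}.
\]
Applying the same theorem to $(P_3,P_4)$ gives a second presentation $H\cong \tilde K\oplus\tilde K$ governed by a positive contraction $\tilde C$. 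Since both presentations express the same operator $A$, there is an intertwining unitary conjugating one $2\times 2$ block form of $A$ into the other, and rewriting $P_3,P_4$ in the first Halmos frame (using this intertwiner) casts them as explicit $2\times 2$ operator matrices over $K$ whose entries are words in $C$, $\tilde C$ and intertwiner blocks.

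The main obstacle, which is the heart of the argument, is to extract from this intertwining a nontrivial polynomial identity jointly satisfied by the commuting self-adjoint operators $C^2$ and $\tilde C^2$. Comparing the characteristic polynomials of the $2\times 2$ blocks representing $A$ from the two sides already forces the spectral parameters to lie on an algebraic curve in $(c,\tilde c)\in(0,1)^2$; a second independent relation coming from $P_3^2=P_3$ (or equivalently from the trace of $P_3$ in the first frame) then cuts this curve down to a finite set of points. Once such a relation is in hand, the joint spectral measure of $(C^2,\tilde C^2)$ is supported on a finite set, so by the spectral theorem $H$ splits as a finite direct sum of finite-dimensional subrepresentations of the four-tuple, of dimensions bounded by a small constant coming from the $2\times 2$ block structure. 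Irreducibility of the original orthoscalar four-tuple forces this sum to collapse to a single summand, so $H$ is finite-dimensional. The delicate part is keeping the two Halmos frames and the intertwiner book-kept carefully enough that the polynomial relation above is genuinely nontrivial for every admissible character $\alpha$ satisfying the standing hypotheses.
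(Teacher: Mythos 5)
Your opening reduction is where the argument breaks down. The joint eigensubspaces of the pair $(P_1,P_2)$ --- that is, $\ker P_1\cap\ker P_2$, $\ran P_1\cap\ran P_2$ and the two mixed intersections --- are invariant under $P_1$ and $P_2$ (and hence under $A$), but they are \emph{not} invariant under $P_3$ and $P_4$, so they are not direct summands of the four-tuple and cannot be ``split off.'' Your claim that in an irreducible four-tuple of dimension greater than two these subspaces are trivial is in fact false: the paper's discrete series shows the opposite. Every irreducible orthoscalar four-tuple of dimension $>2$ has $\alpha_1+\alpha_2+\alpha_3+\alpha_4\ne 2$ and necessarily possesses an eigenvector of $A_1=\alpha_1P_1+\alpha_2P_2$ lying in one of these intersections (see the explicit formulas \eqref{eq:4tuple-2lplus1dim} and \eqref{eq:4tuple-2ldim}, whose end blocks are exactly such joint eigenvectors). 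So the non-generic case is not a removable degeneracy; it is the main case, and it is precisely there that finite-dimensionality has to be proved. The paper does this by a ladder argument: starting from an eigenvalue $\lambda_0$ of $A_1$ in the discrete part of its allowed spectral set, the relations $A_1+A_2=I$ and the reflection symmetry of the spectrum of a positive combination of two projections generate a sequence $\lambda_k,\mu_k$ that moves in steps of $\Lambda=\alpha_1+\alpha_2+\alpha_3+\alpha_4-2$; when $\Lambda\ne0$ this sequence leaves the compact sets $\Delta_1,\Delta_2$ after finitely many steps, the span of the corresponding eigenvectors is invariant under all four projections, and irreducibility gives $\dim H<\infty$. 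Nothing in your sketch replaces this mechanism.

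Even restricted to the generic (Halmos-position) case, your central step is not carried out and, as stated, one link is unsound: finiteness of the joint spectrum of two self-adjoint operators does not bound the dimension of the space (a spectral subspace can be infinite-dimensional), and it is not established that your $C^2$ and $\tilde C^2$ commute, since they arise in two different frames on a priori different spaces. The paper's treatment of this case is both simpler and complete: $\bigl(A_1-\tfrac{\alpha_1+\alpha_2}{2}I\bigr)^2$ commutes with $P_1,P_2$, and since $A_2=I-A_1$ it equals $\bigl(A_2-\tfrac{\alpha_3+\alpha_4}{2}I\bigr)^2$ (which also forces $\alpha_1+\alpha_2+\alpha_3+\alpha_4=2$), hence commutes with $P_3,P_4$ as well; irreducibility makes it scalar, and one concludes $\dim H=2$. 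I would suggest replacing your algebraic-curve heuristic by this commutant observation for the generic case, and adding the eigenvalue-ladder argument for the non-generic case, which is where the theorem actually lives.
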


Here we give an independent proof of this fact, which involves constructions which we will apply in a more general case.

\subsubsection{Case of $\alpha_1+\alpha_2+\alpha_3+\alpha_4=2$. Continuous series}
\begin{proposition}
 Let $P_1$, \dots, $P_4$ be an irreducible family of projections in $H$ satisfying \eqref{eq:4tuple}, for which $\ker P_j\cap \ker P_k=\{0\}$, $\ran P_j\cap \ran P_k=\{0\}$, $\ker P_j\cap  \ran P_k=\{0\}$,  $j\ne k$. Then $\alpha_1+\alpha_2+\alpha_3+\alpha_4=2$ and $\dim H=2$.
\end{proposition}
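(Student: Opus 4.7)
The plan is to first establish $\sum \alpha_j = 2$ by a spectral-symmetry argument, next to show that the six two-projection invariants $N_{jk} := (P_j - P_k)^2$ are all scalars, and finally to pass to Halmos's two-projection decomposition to collapse the representation to two dimensions.

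For the sum, the two-projection theorem applied to the generic-position pair $(P_1, P_2)$ realises $H = K \oplus K$ with
\[
 P_1 = \begin{pmatrix} I & 0 \\ 0 & 0 \end{pmatrix}, \qquad
 P_2 = \begin{pmatrix} C^2 & CS \\ SC & S^2 \end{pmatrix},
\]
for commuting positive $C, S$ on $K$ with $C^2 + S^2 = I$ and $0 < C^2 < I$ strictly. A direct eigenvalue calculation shows the spectrum of $F := \alpha_1 P_1 + \alpha_2 P_2$ is symmetric about $(\alpha_1+\alpha_2)/2$; the same argument applied to $(P_3, P_4)$ shows that $\sigma(\alpha_3 P_3 + \alpha_4 P_4)$ is symmetric about $(\alpha_3+\alpha_4)/2$. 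Since $\alpha_3 P_3 + \alpha_4 P_4 = I - F$, both reflections preserve $\sigma(F)$, so their composition, a translation by $(\sum \alpha_j) - 2$, also does; boundedness of $\sigma(F)$ forces $\sum \alpha_j = 2$.

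A short expansion of $F_{jk}^2$ where $F_{jk} := \alpha_j P_j + \alpha_k P_k$ gives the identity
\[
 \alpha_j \alpha_k N_{jk} = F_{jk}\bigl((\alpha_j+\alpha_k)I - F_{jk}\bigr),
\]
which together with $F_{12} + F_{34} = I$ and $\sum \alpha_j = 2$ yields the affine relation $\alpha_3\alpha_4 N_{34} = \alpha_1\alpha_2 N_{12} + (1 - \alpha_1 - \alpha_2) I$, and similarly for the $(13,24)$ and $(14,23)$ splittings. A brief calculation shows $[N_{jk}, P_j] = [N_{jk}, P_k] = 0$; the affine relations then show each $N_{jk}$ commutes with the opposite pair as well, hence with all of $P_1, \dots, P_4$. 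By irreducibility every $N_{jk}$ is a scalar multiple of $I$. Back in the Halmos frame, scalarity of $N_{12}$ makes $C$ and $S$ scalars, so $P_1 = p_1 \otimes I_K$ and $P_2 = p_2 \otimes I_K$ for fixed $p_1, p_2 \in M_2(\mathbb C)$. Writing $P_3 = \bigl(\begin{smallmatrix} A_3 & B_3 \\ B_3^* & D_3 \end{smallmatrix}\bigr)$, the projection identities reduce $N_{13}$ to $\mathrm{diag}(I - A_3, D_3)$; scalarity of $N_{13}$ forces $A_3 = (1-n_{13})I$, $D_3 = n_{13}I$, and $B_3 = \sqrt{n_{13}(1-n_{13})}\, U_3$ for a unitary $U_3$ on $K$, with an analogous description for $P_4$ involving $U_4$. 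A short computation of $N_{23}$ using $P_2 = p_2 \otimes I_K$ shows that $U_3 + U_3^*$ is a scalar, and similarly for $U_4 + U_4^*$.

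The off-diagonal block of $\sum \alpha_j P_j = I$ reads $\alpha_3 \sqrt{n_{13}(1-n_{13})}\, U_3 + \alpha_4 \sqrt{n_{14}(1-n_{14})}\, U_4 = -\alpha_2 cs\, I_K$; subtracting its adjoint makes the anti-Hermitian parts of $U_3$ and $U_4$ proportional, so $[U_3, U_4] = 0$. Thus $C^*(U_3, U_4)$ is abelian. Since the commutant $\{P_1, P_2\}' = I_2 \otimes B(K)$ intersects $\{P_3, P_4\}'$ in $\{\mathrm{diag}(Y,Y) : Y \in \{U_3, U_4\}'\}$, irreducibility of $\{P_1, \dots, P_4\}$ requires $\{U_3, U_4\}' = \mathbb C I_K$; the inclusion $C^*(U_3, U_4) \subseteq \{U_3, U_4\}'$ valid for an abelian algebra then forces $U_3, U_4$ to be scalars. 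All four $P_j$ therefore lie in $M_2(\mathbb C) \otimes I_K$, whose commutant $I_2 \otimes B(K)$ can be $\mathbb C I$ only if $\dim K = 1$, giving $\dim H = 2$. The main obstacle I expect is the block bookkeeping in the Halmos frame, in particular the manipulation that isolates $[U_3, U_4] = 0$ cleanly; once that is in hand, the final dimension reduction is essentially the observation that an abelian von Neumann algebra with trivial commutant must be $\mathbb C I$.
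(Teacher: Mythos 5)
Your proposal is correct, and its first half is essentially the paper's argument: the paper also applies the two-projection structure theorem to $(P_1,P_2)$ and $(P_3,P_4)$, writes $A_1=\alpha_1P_1+\alpha_2P_2$ and $A_2=\alpha_3P_3+\alpha_4P_4$ in the symmetric normal form, and gets $\alpha_1+\alpha_2+\alpha_3+\alpha_4=2$ by comparing the two centres of symmetry of $\sigma(A_2)=1-\sigma(A_1)$; your reflection-composition/translation argument is just a precise way of saying the same thing. Where you genuinely diverge is in how the collapse to $\dim H=2$ is finished. The paper only uses one invariant, $(A_1-\tfrac{\alpha_1+\alpha_2}{2}I)^2=(A_2-\tfrac{\alpha_3+\alpha_4}{2}I)^2$, shows it commutes with all four projections, hence is scalar, and then simply asserts that irreducibility gives $\mathcal H=\mathbb C$. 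You instead prove scalarity of all six operators $(P_j-P_k)^2$ via the identity $\alpha_j\alpha_kN_{jk}=F_{jk}((\alpha_j+\alpha_k)I-F_{jk})$ and the affine relations between complementary pairs, and then carry out the reduction explicitly in the Halmos frame of $(P_1,P_2)$: block analysis of $P_3,P_4$ producing the unitaries $U_3,U_4$, commutativity of $U_3,U_4$ from the off-diagonal block of \eqref{eq:4tuple}, and the commutant computation forcing $U_3,U_4$ (and hence everything) into $M_2(\mathbb C)\otimes I_K$ with $\dim K=1$. This buys a complete, self-contained justification of exactly the step the paper leaves implicit, at the cost of more block bookkeeping; the only detail worth adding is a remark that $n_{13},n_{14}\in(0,1)$ (so that $B_3,B_4$ really are positive multiples of unitaries), which follows from the generic-position hypotheses since $\ker(P_1-P_3)$ and $\ker(P_1-P_3\mp I)$ are sums of the excluded intersections.
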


\begin{proof}
 Introduce operators $A_1=\alpha_1P_1+\alpha_2P_2$, $A_2=\alpha_3P_3+\alpha_4P_4$.
  The orthoscalarity condition means $A_1+A_2=I$.  The conditions that the kernels and ranges of the projections are zero imply, due to the structure theorem for a pair of projections, that the space $H$ can be decomposed as $H=\mathcal H\oplus \mathcal H =\mathbb C^2\otimes \mathcal H$ so that
\begin{equation}\label{eq:a1a2four}
A_1 = \frac{\alpha_1+\alpha_2}{2}I +\begin{pmatrix}1&0\\0&- 1\end{pmatrix}\otimes C_1, \quad A_2 = \frac{2-\alpha_1-\alpha_2}{2}I+\begin{pmatrix}-1&0\\0& 1\end{pmatrix} \otimes C_1,
\end{equation}
where $\frac{|\alpha_1-\alpha_2|}{2}I<C_1<\frac{\alpha_1+\alpha_2}{2}I$,
is a self-adjoint operator in $\mathcal H$. Applying the same structure theorem to $P_3$, $P_4$, we conclude that $A_2$ can be represented (probably for another decomposition $H =\mathbb C^2\otimes \mathcal H'$) as
\[
 A_2=\frac{\alpha_3+\alpha_4}{2}I +\begin{pmatrix}-1&0\\0& 1\end{pmatrix} \otimes C_2 , \quad \frac{|\alpha_3-\alpha_4|}{2}I<C_2<\frac{\alpha_3+\alpha_4}{2}I.
\]
Comparing this to \eqref{eq:a1a2four}, we have
$ \alpha_1+\alpha_2+\alpha_3+\alpha_4 =2$.

It is easy to see that the operator $
\Bigl(A_1-\frac{\alpha_1+\alpha_2}{2}I\Bigr)^2 = I\otimes C_1^2$
commutes with $P_1$ and $P_2$. The same way, the operator $(A_2 -
\frac{\alpha_3+\alpha_4}{2}I)^2= I\otimes C_1^2$ commutes with
$P_3$, $P_4$. Therefore, $I\otimes C_1^2$ commutes with $P_1$,
\dots, $P_4$, and therefore, is a scalar operator in an
irreducible representation. Thus, $C_1=cI$ for some
\[
c\in  \Bigl(\frac{|\alpha_1-\alpha_2|}{2},\frac{\alpha_1+\alpha_2}{2}\Bigr)\cap \Bigl(\frac{|\alpha_3-\alpha_4|}{2},\frac{\alpha_3+\alpha_4}{2}\Bigr),
\]
and the irreducibility implies $\mathcal H=\mathbb C$.
\end{proof}

\begin{remark}
 One can obtain explicit formulas for the corresponding two-dimensional representations. Write the projections $P_1$, $P_2$ in the form
\begin{align}\label{eq:4tuple-twodim}
 P_1&=\frac12 \begin{pmatrix}1+\lambda_1&\sqrt{1-\lambda_1^2}\\ \sqrt{1-\lambda_1^2}&1-\lambda_1\end{pmatrix},
&
 P_2&=\frac12 \begin{pmatrix}1+\lambda_2&-\sqrt{1-\lambda_2^2}\\-\sqrt{1-\lambda_2^2}&1-\lambda_2\end{pmatrix},
\end{align}
with
\[
\lambda_1=\frac{\alpha_1^2-\alpha_2^2 +4c^2} {4c\alpha_1}, \quad \lambda_2=\frac{\alpha_2^2-\alpha_1^2 +4c^2} {4c\alpha_2},
\]
so that $A_1=\alpha_1P_1+\alpha_2P_2 = c\bigl(\begin{smallmatrix}1&0\\0&-1\end{smallmatrix}\bigr)$. The projections $P_3$ and $P_4$ can be represented as
\begin{equation}\label{eq:4tuple-twodim2}
P_3=\frac12 \begin{pmatrix}1+\lambda_3&\gamma\sqrt{1-\lambda_3^2}\\ \bar\gamma \sqrt{1-\lambda_3^2}&1-\lambda_3\end{pmatrix},
\quad
 P_4=\frac12 \begin{pmatrix}1+\lambda_4&-\gamma\sqrt{1-\lambda_4^2}\\-\bar\gamma\sqrt{1-\lambda_4^2}&1-\lambda_4\end{pmatrix},
\end{equation}
with
\[
\lambda_3=\frac{\alpha_3^2-\alpha_4^2 +4c^2} {4c\alpha_3}, \quad \lambda_4=\frac{\alpha_4^2-\alpha_3^2 +4c^2} {4c\alpha_4},\quad |\gamma|=1.
\]
Therefore, the set of two-dimensional irreducible representations, for which $\ker P_j\cap \ker P_k=\{0\}$, $\ran P_j\cap \ran P_k=\{0\}$, $\ker P_j\cap  \ran P_k=\{0\}$,  $j\ne k$, is described by two continuous parameters, $c$ and $\gamma$.
\end{remark}

\subsubsection{Case $\alpha_1+\alpha_2+\alpha_3+\alpha_4=2$. Discrete series}
Now consider the case where at least one of the subspaces $\ker P_j\cap \ker P_k$, $\ran P_j\cap \ran P_k$, $\ker P_j\cap  \ran P_k$, $\ran P_j\cap  \ker P_k$, $j\ne k$, is nonzero. We obviously can assume $j=1$, $k=2$. Introduce sets
\begin{align*}
 \Delta_{1,d}&=\{0,\alpha_1,\alpha_2,\alpha_1+\alpha_2\},
&
 \Delta_{1,c}&=(0,\alpha_1)\cup (\alpha_2,\alpha_1+\alpha_2),
\\
 \Delta_{2,d}&=\{0,\alpha_3,\alpha_4,\alpha_3+\alpha_4\},
&
 \Delta_{2,c}&=(0,\alpha_3)\cup (\alpha_4,\alpha_3+\alpha_4),
\\
 \Delta_{1}&=[0,\alpha_1]\cup [\alpha_2,\alpha_1+\alpha_2],
&
 \Delta_{2}&=[0,\alpha_3]\cup [\alpha_4,\alpha_3+\alpha_4],
\end{align*}
so that $\Delta_{1}= \Delta_{1,d}\cup \Delta_{1,c}$, $\Delta_{2}= \Delta_{2,d}\cup \Delta_{2,c}$.

Then there exists a number $\lambda_0\in\Delta_{1,d}$ which is an eigenvalue of the operator $A_1$. Let $f_0$ be the corresponding unit eigenvector. Since $A_1+A_2=I$, $f_0$ is also an eigenvector of $A_2$, $A_2f_0 =\mu_0 f_0$, where $\mu_0=1-\lambda_0$. The following two cases can arise.

(i) $\mu_0\in \Delta_{2,d}$. Then the space spanned by $f_0$ is invariant w.r.t. $P_1$, \dots, $P_4$, and due to the irreducibility, is the whole $H$, $\dim H=1$, and
\begin{equation}\label{eq:4tuple-onedim}
P_1=\delta_1, \quad P_2=\delta_2, \quad P_3=\delta_3, \quad P_4 = \delta_4, \qquad \delta_1,\delta_2,\delta_3,\delta_4\in \{0,1\}.
\end{equation}
Notice that in this case there exists such permutation $\sigma$ of indexes, that
\[
\alpha_{\sigma(1)}+\alpha_{\sigma(2)} = \alpha_{\sigma(3)}+\alpha_{\sigma(4)}=1.
\]

(ii) $\mu_0\in \Delta_{2,c}$. Then $\mu_1=\alpha_3+\alpha_4 - \mu_0$ is also an eigenvalue of $A_2$ with some unit eigenvector $f_1$.

In the latter case, since $A_1+A_2=I$, the vector $f_1$ is an
eigenvector of $A_1$ as well, $A_1f = (I-A_2)f = \lambda_1 f_1$,
$\lambda_1 = 1-\mu_1$. Since
$\alpha_1+\alpha_2+\alpha_3+\alpha_4=2$, one can see that
$\lambda_1\in\Delta_{1,d}$. Indeed, otherwise
$\lambda_1\in\Delta_{1,c}$ and $\alpha_1+\alpha_2 - \lambda_1
=\lambda_0 \in \Delta_{1.c}$ which contradicts the initial setting
$\lambda_0\in\Delta_{1,d}$. Therefore, $H$ is spanned by
$(f_0,f_1)$. The projections are
\begin{align}\label{eq:4tuple-twodim3}
P_1&=
\begin{pmatrix}
\delta_1&0\\0&1-\delta_1
\end{pmatrix},
&
P_3 &= \frac12
\begin{pmatrix}
1+\tau_1&\sqrt{1-\tau_1^2}\\ \sqrt{1-\tau_1^2}& 1-\tau_1
\end{pmatrix},
\\
\notag
P_2&=
\begin{pmatrix}
\delta_2&0\\0&1-\delta_2
\end{pmatrix},
&
P_4 &= \frac12
\begin{pmatrix}
1+\tau_2&-\sqrt{1-\tau_2^2}\\ -\sqrt{1-\tau_2^2}& 1-\tau_2
\end{pmatrix},
\end{align}
where $\delta_1,\delta_2\in \{0,1\}$ are defined from $\alpha_1\delta_1+\alpha_2\delta_2=\lambda_0\in\Delta_{1,d}$, and
\[
\tau_1=\frac{2\mu_0^2 - (2\mu_0 -\alpha_3)(\alpha_3+\alpha_4)}{\alpha_3(2\mu_0 -\alpha_3-\alpha_4)}
, \quad
\tau_2= \frac{2\mu_0^2 - (2\mu_0 -\alpha_4)(\alpha_3+\alpha_4)}{\alpha_4(2\mu_0 -\alpha_3-\alpha_4)}
,
\]
$\mu_0=1-\lambda_0\in\Delta_{2,c}$.

\subsubsection{Case $\alpha_1+\alpha_2+\alpha_3+\alpha_4\ne2$}
In the case where $\alpha_1+\alpha_2+\alpha_3+\alpha_4\ne2$,  for $\lambda_1$ there can be two possibilities.

(i) $\lambda_1\in \Delta_{1,d}$. Then the space spanned by $(f_0,f_1)$ is invariant w.r.t. $P_1$, \dots, $P_4$, and due to the irreducibility, is the whole $H$, $\dim H=2$, the projections are given by \eqref{eq:4tuple-twodim3}.

(ii) $\lambda_1\in \Delta_{1,c}$. In this case, $\lambda_2=\alpha_1+\alpha_2 - \lambda_1$ is also an eigenvalue of $A_1$ with some unit eigenvector $f_2$.

In the latter case, since $A_1+A_2=I$, the vector $f_2$ is an eigenvector of $A_2$ and we proceed as above.

Consider two sequences, $\lambda_0, \lambda_1,\dots$, and $\mu_0,\mu_1,\dots$, constructed from $\lambda_0$ by the following rules. For $j=2k$, $k\ge0$ let $\mu_j=1-\lambda_j$, $\mu_{j+1} = \alpha_3+\alpha_4 - \mu_j$, $\lambda_{j+1} = 1-\mu_{j+1}$, $\lambda_{j+2} = \alpha_1+\alpha_2 -\lambda_{j+1}$. We have
\begin{align*}
 \lambda_{2k}&=\lambda_0+ k(\alpha_1+\alpha_2+\alpha_3+\alpha_4 -2),
\\
 \lambda_{2k+1}&= \alpha_1+\alpha_2 -\lambda_0 - (k+1)(\alpha_1+\alpha_2+\alpha_3+\alpha_4 -2),
\\
 \mu_{2k}&=1-\lambda_0 - k(\alpha_1+\alpha_2+\alpha_3+\alpha_4 -2),
\\
 \mu_{2k+1}&= \lambda_0+ \alpha_3+\alpha_4 - 1 + k(\alpha_1+\alpha_2+\alpha_3+\alpha_4 -2), \quad k\ge 0.
\end{align*}
Then the arguments above imply that $\sigma(A_1) \subset (\lambda_k)_{k=0}^\infty$, $\sigma(A_2) \subset (\mu_k)_{k=0}^\infty$. Since $\sigma(A_1)\subset \Delta_1$, $\sigma(A_2)\subset\Delta_2$, then for $\alpha_1+\alpha_2+\alpha_3+\alpha_4 -2 \ne0$ only  finite number of $\lambda_k$ may belong to $\sigma (A_1)$, and the same number of $\mu_k$ may belong to $\sigma(A_2)$.

Therefore for $\alpha_1+\alpha_2+\alpha_3+\alpha_4 -2 \ne0$, $\sigma(A_1) = (\lambda_k)_{k=0}^m$, $\sigma(A_2) = (\mu_k)_{k=0}^m$, where $m\ge0$ is determined by the folowing conditions:
\begin{align}
\label{eq:4tupe-2m}
m=2l{:}\quad \lambda_k &\in \Delta_{1,c},\quad 1\le k \le m,
\\
\mu_k&\in\Delta_{2,c},\quad 0\le k \le m-1, \quad \mu_m\in\Delta_{2,d} ;\notag
\\
\label{eq:4tupe-2m+1}
m=2l+1{:}\quad \lambda_k &\in \Delta_{1,c}, \quad 1\le k \le m-1, \quad \lambda_m\in\Delta_{1,d},
\\
\mu_k&\in\Delta_{2,c},\quad 0\le k \le m. \notag
\end{align}
The dimension of the space $H$ is equal to $m+1$.

\subsubsection{Description of representations}
As we already shown, in the case where
$\alpha_1+\alpha_2+\alpha_3+\alpha_4 -2 =0$, or
$\alpha_1+\alpha_2+\alpha_3+\alpha_4 -2 \ne0$, $\dim H\le 2$, the
projections are given by the formulas \eqref{eq:4tuple-twodim},
\eqref{eq:4tuple-twodim2}, \eqref{eq:4tuple-onedim} or
\eqref{eq:4tuple-twodim3}. Now assume
$\alpha_1+\alpha_2+\alpha_3+\alpha_4 -2 \ne0$, $\dim H> 2$. In
order to give explicit formulas for the projections, let us
introduce projections in $\mathbb C^2$
\begin{equation}\label{eq:basicpair}
P_\tau = \frac12\begin{pmatrix} 1+\tau&\sqrt{1-\tau^2}\\ \sqrt{1-\tau^2}& 1-\tau\end{pmatrix}, \quad Q_\tau = \frac12\begin{pmatrix} 1+\tau&-\sqrt{1-\tau^2}\\ -\sqrt{1-\tau^2}& 1-\tau\end{pmatrix}, \qquad \tau\in (0,1).
\end{equation}

In the case $m=2l$, $l\ge 1$, the space $H=\mathbb C^{m+1}$ spanned by the joint eigenvectors $f_0$, \dots, $f_m$, of $A_1$ and $A_2$,  can be written as
$\mathbb C\oplus \underbrace{\mathbb C^2\oplus\dots\oplus\mathbb C^2}_{\mbox{$l$ times}}$, or as ${\underbrace{\mathbb C^2\oplus\dots\oplus\mathbb C^2}_{\mbox{$l$ times}}} \oplus \mathbb C$, so that the projections take the form
\begin{align}\label{eq:4tuple-2lplus1dim}
 P_1 &= \delta_1 \oplus P_{p_1}\oplus\dots\oplus P_{p_l}, & P_2& = \delta_2 \oplus Q_{q_1}\oplus\dots\oplus Q_{q_l},
&H&= \mathbb C\oplus \underbrace{\mathbb C^2\oplus\dots\oplus\mathbb C^2}_{\mbox{$l$ times}},
\\
\notag
 P_3 &= P_{r_0}\oplus\dots\oplus P_{r_{l-1}}\oplus \delta_3  , & P_4 &= Q_{s_0}\oplus\dots\oplus Q_{s_{l-1}}\oplus\delta_4,
&H&={\underbrace{\mathbb C^2\oplus\dots\oplus\mathbb C^2}_{\mbox{$l$ times}}} \oplus \mathbb C,
\end{align}
where $\delta_1,\delta_2,\delta_3,\delta_4 \in \{0,1\}$ are defined from the conditions
\[
 \alpha_1\delta_1+\alpha_2\delta_2 =\lambda_0\in \Delta_{1,d}, \quad \alpha_3\delta_3+\alpha_4\delta_4 = \mu_m\in\Delta_{2,d},
\]
and
\begin{align*}
 p_j&=\frac{2\lambda_{2j-1}^2 - (2\lambda_{2j-1} -\alpha_1)(\alpha_1+\alpha_2)}{\alpha_1(2\lambda_{2j-1} -\alpha_1-\alpha_2)},
 \quad q_j=\frac{2\lambda_{2j-1}^2 - (2\lambda_{2j-1} -\alpha_2)(\alpha_1+\alpha_2)}{\alpha_2(2\lambda_{2j-1} -\alpha_1-\alpha_2)},
\\
&\qquad 1\le j \le l,
\\
 r_j&=\frac{2\mu_{2j}^2 - (2\mu_{2j} -\alpha_3)(\alpha_3+\alpha_4)}{\alpha_3(2\mu_{2j} -\alpha_3-\alpha_4)},
 \quad s_j=\frac{2\mu_{2j}^2 - (2\mu_{2j} -\alpha_4)(\alpha_3+\alpha_4)}{\alpha_4(2\mu_{2j} -\alpha_3-\alpha_4)},
\\
& \qquad 0\le j \le l-1.
\end{align*}
The conditions \eqref{eq:4tupe-2m} are equivalent to
\begin{align*}
\alpha_1+\alpha_2+\alpha_3+\alpha_4 -2 &= (1-\lambda_0-\mu_{2l})/l,
\\
  \lambda_0+ \frac kl (1-\lambda_0-\mu_m) &\in \Delta_{1,c},
\\
\mu_m +\frac kl (1-\lambda_0-\mu_m)& \in \Delta_{2.c}, \qquad 1\le k\le l,
\end{align*}
where $\lambda_0\in \Delta_{1,d}
$, $\mu_m \in
\Delta_{2,d}
$ (total of 16 possibilities).

In the case $m=2l-1$, $l\ge 1$, the space $H$ spanned by $f_0$, \dots, $f_m$ can be written as
$\mathbb C\oplus {\underbrace{\mathbb C^2\oplus\dots\oplus\mathbb C^2}_{\mbox{$l-1$ times}}}\oplus\mathbb C$, or as ${\underbrace{\mathbb C^2\oplus\dots\oplus\mathbb C^2}_{\mbox{$l$ times}}} $, so that the projections take the form
\begin{align}\label{eq:4tuple-2ldim}
 P_1 &= \delta_1 \oplus P_{p_1}\oplus\dots\oplus P_{p_{l-1}}\oplus\delta_2,
\\
\notag
 P_2& = \delta_3 \oplus Q_{q_1}\oplus\dots\oplus Q_{q_{l-1}}\oplus \delta_4,
&H&= \mathbb C\oplus {\underbrace{\mathbb C^2\oplus\dots\oplus\mathbb C^2}_{\mbox{$l-1$ times}}}\oplus\mathbb C,
\\
\notag
 P_3 &= P_{r_0}\oplus\dots\oplus P_{r_{l-1}}  ,
\\
\notag
 P_4 &= Q_{s_0}\oplus\dots\oplus Q_{s_{l-1}},
&H&={\underbrace{\mathbb C^2\oplus\dots\oplus\mathbb C^2}_{\mbox{$l$ times}}},
\end{align}
where $\delta_1,\delta_2,\delta_3,\delta_4 \in \{0,1\}$ are defined from the conditions
\[
 \alpha_1\delta_1+\alpha_2\delta_2 =\lambda_0\in\Delta_{1,d}, \quad \alpha_1\delta_3+\alpha_2\delta_4 = \lambda_m\in\Delta_{1,d},
\]
and
\begin{align*}
 p_j&=\frac{2\lambda_{2j-1}^2 - (2\lambda_{2j-1} -\alpha_1)(\alpha_1+\alpha_2)}{\alpha_1(2\lambda_{2j-1} -\alpha_1-\alpha_2)},
 \\
  q_j&=\frac{2\lambda_{2j-1}^2 - (2\lambda_{2j-1} -\alpha_2)(\alpha_1+\alpha_2)}{\alpha_2(2\lambda_{2j-1} -\alpha_1-\alpha_2)},
&& 1\le j \le l-1,
\\
 r_j&=\frac{2\mu_{2j}^2 - (2\mu_{2j} -\alpha_3)(\alpha_3+\alpha_4)}{\alpha_3(2\mu_{2j} -\alpha_3-\alpha_4)},
 \\
  s_j&=\frac{2\mu_{2j}^2 - (2\mu_{2j} -\alpha_4)(\alpha_3+\alpha_4)}{\alpha_4(2\mu_{2j} -\alpha_3-\alpha_4)},
&& 0\le j \le l-1.
\end{align*}
The conditions \eqref{eq:4tupe-2m+1} are equivalent to
\begin{align*}
\alpha_1+\alpha_2+\alpha_3+\alpha_4 -2 &= \frac1l(\alpha_1+\alpha_2 - \lambda_1-\lambda_m),
\\
  \lambda_0+ \frac kl (\alpha_1+\alpha_2-\lambda_0-\lambda_m)& \in \Delta_{1,c}, \qquad 1\le k \le l-1,
\\
1-\lambda_0 -\frac kl (1-\lambda_0-\mu_{2l}) &\in \Delta_{2.c}, \qquad 0\le k\le l-1,
\end{align*}
where $\lambda_0,\lambda_m \in
\Delta_{1,d}
$ (total of 16 possibilities).

\subsection{Main theorem}\label{sec:main}
The main result of this paper is the following.

\begin{theorem}\label{th:main}
 Any irreducible orthoscalar representation of a finite poset $S$ such that $S$ can be decomposed into a union of two unitarily one-parameter sets as described above, is finite-dimensional.
\end{theorem}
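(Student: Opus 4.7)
Set $A_i=\sum_{g\in S_i}\alpha_g P_g$ for $i=1,2$, so that the orthoscalarity condition becomes $A_1+A_2=I$. Applying Proposition~\ref{spectrum} to the restrictions of the representation to $S_1$ and to $S_2$ separately gives $\sigma(A_i)\subset\Delta^{(i)}=\Delta_d^{(i)}\cup\Delta_c^{(i)}$, where $\Delta_c^{(i)}$ is a union of two intervals symmetric about $\Sigma^{(i)}/2$ and eigenvalue multiplicities inside $\Delta_c^{(i)}$ are preserved by the reflection $\nu\mapsto\Sigma^{(i)}-\nu$. Since $A_2=I-A_1$, eigenvectors are shared, with $\lambda\in\sigma(A_1)$ paired with $1-\lambda\in\sigma(A_2)$ on the same eigenspace.

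Mimicking the chain construction of Section~\ref{sec:four}, I start from any eigenvalue $\lambda_0$ of $A_1$ and generate its orbit under the two reflections $\lambda\mapsto\Sigma^{(1)}-\lambda$ (applicable when $\lambda\in\Delta_c^{(1)}$) and $\lambda\mapsto 2-\Sigma^{(2)}-\lambda$ (the conjugate by $R(\lambda)=1-\lambda$ of the $A_2$-reflection, applicable when $1-\lambda\in\Delta_c^{(2)}$). Their composition is the translation $\lambda\mapsto\lambda+(\Sigma^{(1)}+\Sigma^{(2)}-2)$, so $\lambda_{2k}=\lambda_0+k(\Sigma^{(1)}+\Sigma^{(2)}-2)$; when $\Sigma^{(1)}+\Sigma^{(2)}\ne 2$ this arithmetic progression cannot remain in the bounded set $\Delta^{(1)}$, so the orbit is finite and the chain terminates at a discrete point of $\Delta_d^{(1)}$ or $\Delta_d^{(2)}$. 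By Proposition~\ref{prop:reps-noos}, for $\lambda\in\Delta_c^{(i)}$ the pair $E_\lambda H\oplus E_{\Sigma^{(i)}-\lambda}H$ is $\{P_g:g\in S_i\}$-invariant (it is a sum of copies of a single 2D $S_i$-irreducible with parameter determined by $\lambda$), while $E_\lambda H$ with $\lambda\in\Delta_d^{(i)}$ is on its own $\{P_g:g\in S_i\}$-invariant. The direct sum of the eigenspaces along the finite orbit is therefore invariant under every $P_g$, $g\in S$, and by irreducibility coincides with $H$.

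The resonant case $\Sigma^{(1)}+\Sigma^{(2)}=2$, where the translation degenerates and the counting argument collapses, is the main obstacle. It is handled by extending the $C_1^2$-device of Section~\ref{sec:four}: by Proposition~\ref{prop:reps-noos} the operator $A_i$ has spectrum symmetric about $\Sigma^{(i)}/2$ on every $S_i$-irreducible, so $(A_i-\Sigma^{(i)}/2)^2$ is scalar on every $S_i$-irreducible and hence lies in the commutant of $\{P_g:g\in S_i\}$. In the resonant case $\Sigma^{(1)}/2=1-\Sigma^{(2)}/2$, so the two operators $(A_1-\Sigma^{(1)}/2)^2$ and $(A_2-\Sigma^{(2)}/2)^2$ coincide; this common operator commutes with every $P_g$, $g\in S$, and by irreducibility is a scalar $c^2 I$. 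Consequently $\sigma(A_1)\subset\{\Sigma^{(1)}/2\pm c\}$, and a centralizer analysis on the two resulting eigenspaces bounds the multiplicities exactly as the step $\mathcal H=\mathbb C$ does in the four-tuple proof. The remaining and most delicate work is this multiplicity bookkeeping when chain eigenspaces mix 1D and 2D $S_i$-irreducibles, together with the standard reductions to smaller posets when some $P_g$ degenerates to $0$ or $I$ on an invariant subspace.
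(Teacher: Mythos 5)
Your skeleton is the same as the paper's: the operators $A_1+A_2=I$, the two reflections coming from the spectral symmetry of Proposition~\ref{spectrum}, their composition acting as translation by $\Lambda=\Sigma_1+\Sigma_2-2$, boundedness of $\Delta_1$ forcing the orbit to be finite when $\Lambda\ne0$, and the resonant case $\Lambda=0$ handled by observing that $(2A_1-\Sigma_1)^2=(2A_2-\Sigma_2)^2$ commutes with all $P_g$ and is therefore scalar. However, what you actually conclude in the non-resonant case --- that $H$ is a finite direct sum of eigenspaces of $A_1$ --- does not prove the theorem, because nothing in your argument prevents any one of these eigenspaces from being infinite-dimensional; the ``multiplicity bookkeeping'' you defer at the end is precisely the content of the statement, not a routine afterthought. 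The paper closes this by working with a single eigenvector rather than with eigenspaces: starting from one eigenvector $f_0$ whose eigenvalue lies in the discrete part of $\Delta_1$, one builds, exactly as for the four-tuples of Section~\ref{sec:four} (and explicitly via the operators $D_1(\lambda)$, $D_2(\mu)$ of Section~\ref{sec:examples}), a chain $f_0,f_1,\dots,f_n$ with one new vector per step of the orbit, and shows that the \emph{linear span of these $n+1$ vectors} is already invariant under every $P_g$, $g\in S$; irreducibility then gives $H=\mathrm{span}(f_0,\dots,f_n)$, hence $\dim H=n+1$. Without this step (or some substitute bounding multiplicities, e.g.\ a commutant argument showing each eigenspace of $A_1$ in an irreducible representation is one-dimensional), you have only shown that $\sigma(A_1)$ is a finite set of eigenvalues.

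Two smaller points. First, you start ``from any eigenvalue $\lambda_0$ of $A_1$'', but in an a priori infinite-dimensional representation $A_1$ need not have eigenvalues; the paper first runs the same chain on spectral points (the symmetry in Proposition~\ref{spectrum} is a statement about $\sigma(A)$), uses the unbounded arithmetic progression to force the chain into the discrete part of $\Delta_1$ or $\Delta_2$, and only then obtains an eigenvalue from which the eigenvector chain is launched. Second, in the resonant case your plan matches the paper's, but the same multiplicity caveat applies there: after concluding that $\sigma(A_1)$ consists of two points one still needs the eigenvector-chain (or an equivalent) argument to get dimension one or two, as the paper asserts.
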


\begin{proof}
Let $S\ni g \mapsto P_g$ be an irreducible orthoscalar representation of $S$ with character $\alpha =(\alpha_g)_{g\in S}$.

Introduce operators
\begin{equation}\label{eq:a1a2}
 A_1=\sum_{g\in S_1}\alpha_g P_g, \quad A_2=\sum_{g\in S_2}\alpha_g P_g.
\end{equation}
and let $\Delta_1$, $\Delta_2$ be the corresponding sets described by Proposition~\ref{spectrum}, so that $\sigma(A_1)\subset \Delta_1$, $\sigma(A_2)\subset \Delta_2$.
Then the orthoscalarity condition is equivalent to $A_1+A_2=I$.
We also write $\Sigma_1$ and $\Sigma_2$ for numbers \eqref{eq:sigma} corresponding to $S_1$ and $S_2$ respectively.

First, we show that $A_1$ has an eigenvalue. Let $\lambda_0\in \sigma(A_1)$. Then, since $A_1+A_2=I$, $\mu_0=1-\lambda_0 \in \sigma(A_2)\subset \Delta_2$.

For $\mu_0$ there can be two possibilities. If $\mu_0$ lies in the discrete part of $\Delta_2$,
then $\mu_0$ is an eigenvalue of $A_2$ and therefore, $\lambda_0$ is an eigenvalue of $A_1$.

If $\mu_0$ lies in the continuous part of $\Delta_2$, then $\mu_1=\Sigma_2-1 \in \sigma(A_2)$, and since $A_1+A_2=I$, $\lambda_1=1-\mu_1 \in \sigma(A_1)\subset \Delta_1$. If $\lambda_1$ belongs to the discrete part of $\Delta_1$, then it is an eigenvalue of $A_1$, otherwise $\lambda_2 = \Sigma_1 - \lambda_1 \in \sigma(A_1)$ etc.


Thus, we have the following sequence of numbers:
\begin{align}\label{eq:ds}
\lambda_0\to \mu_0& = 1-\lambda_0 \to \mu_1 = \Sigma_2 - \mu_0
\\
\notag
&\to \lambda_1=1-\mu_1 \to \lambda_2 = \Sigma_1 -\lambda_1 \to \mu_2 = 1-\lambda_2 \to \dots,
\end{align}
and we terminate this sequence as soon as $\lambda_k$ hits into the discrete part of $\Delta_1$ or $\mu_k$ hits into the discrete part of $\Delta_2$ which would mean that all the numbers $\lambda_k$ are eigenvalues of $A_1$, and $\mu_k$ are eigenvalues of $A_2$.
Introduce $\Lambda = \Sigma_1+\Sigma_2 -2$, then
simple calculations yield
\begin{align}\label{eq15}
 \lambda_{2k}&= \lambda_0+ k\Lambda,
\\
\notag \lambda_{2k+1} & = \Sigma_1 -\lambda_0- (k+1)\Lambda,
\\
\notag
 \mu_{2k}& = 1-\lambda_0-k\Lambda,
\\
\notag
 \mu_{2k+1}& =\Sigma_2 - 1 +\lambda_0 + k\Lambda, \quad k=0,1,\dots.
\end{align}
If $\Lambda\ne0$, these sequences are unbounded, therefore,
assuming $\lambda_k\in\sigma(A_1)$, $\mu_k \in \sigma(A_2)$ we
conclude that the sequence \eqref{eq:ds} terminates, therefore, it
consists of eigenvalues of $A_1$ and $A_2$. If $\Lambda=0$, then
$(2A_1-\Sigma_1)^2 =(2A_2 -\Sigma_2)^2$ commutes with all $P_g$,
$g\in S$, and due to the irreducibility is a scalar operator. Then
$\sigma(A_1) = 1-\sigma(A_2)$ consists of two points, which are
eigenvalues.

This way, we have shown that in the case where $\Lambda\ne0$ one can assume that $A_1$ has at least one eigenvalue in the discrete part of $\Delta_1$. Taking this eiginvalue as $\lambda_0$ in \eqref{eq:ds} and repeating the argument above, we conclue that the spectrum of $A_1$ consists of a finite number of eigenvalues, $\lambda_0$, \dots, $\lambda_n$. Moreover, similarly to the case of quadruples of projections considered in Section~\ref{sec:four} one can construct a series of corresponding eigenvectors $f_0$, \dots, $f_n$, span of which is an invariant subspace for all $P_g$, $g\in S$ and thus is the whole space $H$.

If $\Sigma_1+\Sigma_2 -2=0$, we have that either $\mu_0=1$ belongs to the discrete part of $\Delta_2$ and irreducible representation is one-dimensional, or $\mu_0=1$ belongs to the continuous part of $\Delta_2$, then $\lambda_1 = 2-\Sigma_2 = 2-\Sigma_1-\Sigma_2 +\Sigma_1 = \Sigma_1$ belongs to the discrete part of $\Delta_1$ and irreducible representation is two-dimensional.
\end{proof}

\begin{remark}
 The proof in fact establishes a method to describe all irreducible representations of $S$, their dimensions and explicit formulas for the projections.
\end{remark}

\begin{remark}
For the case where $\cap_{g\in S_1}\ker P_g \ne \{0\}$, the value of $\Lambda$ enables one to obtain a rough estimate for the dimension of irreducible representations: for dimension $k\ge 2$, one can see that $\Lambda>0$, $1- (k-1)\Lambda \ge0$, which implies $k\le \Lambda^{-1}+1$.
\end{remark}

\section{Essential posets}\label{sec:exact}
Let $S$ be a poset, and let $S\ni g\mapsto P_g$ be its orthoscalar representation,
\[
 \sum_{g\in S}\alpha_gP_g = I.
\]
If $P_h=0$ for some $h\in S$, then the corresponding term can be excluded from the sum above, and the family $P_g$, $g\ne h$ forms an orthoscalar representation of the $S\setminus h$ poset. The same way, if $P_h=I$ for some $h \in S$, then
\[
 \sum_{g\in S, g\ne h}\frac{\alpha_g}{1-\alpha_h}P_g =I,
\]
and the family $P_g$, $g\ne h$ forms an orthoscalar representation of the $S\setminus h$ poset.

Also, if $h<k$ and $P_h=P_k$, then the family $P_g$, $g\ne h$,
forms an orthoscalar repesentation of the $S\setminus h$ poset
with $\alpha_k$ replaced by $\alpha_k'=\alpha_h+\alpha_k$.

In all these  cases, the representation of $S$ is essentially determined by a representation of a smaller poset $S\setminus h$.

\begin{definition}
 We say that an orthoscalar representation $S\ni g \mapsto P_g$ of $S$ is essential, if $P_g\ne 0$,  $P_g\ne I$ for all $g\in S$, and $P_g\ne P_h$ for all $g, h\in S$, $g<h$. We say that a poset $S$ is essential if it possesses an irreducible essential  orthoscalar representation.
\end{definition}

\begin{theorem}\label{th:ess}
 Let $S$ be an essential poset which is a union of two unitarily one-parameter posets. Then $S$ is one of the following posets
\begin{gather*}
\vrule height0pt width0pt depth 8mm
 \setlength{\unitlength}{0.2mm}
 \begin{picture}(60,25)(0,0)
  \put(0,0){\circle*{2}}
  \put(20,0){\circle*{2}}
  \put(40,0){\circle*{2}}
  \put(60,0){\circle*{2}}
 \end{picture}
\ , \quad
 \begin{picture}(60,25)(0,0)
  \put(0,0){\circle*{2}}
  \put(20,0){\circle*{2}}
  \put(40,0){\circle*{2}}
  \put(60,0){\circle*{2}}
  \put(10,20){\circle*{2}}
  \put(0,0){\line(1,2){10}}
  \put(20,0){\line(-1,2){10}}
 \end{picture}
\ , \quad
\begin{picture}(60,0)(0,0)
  \put(0,0){\circle*{2}}
  \put(20,0){\circle*{2}}
  \put(40,0){\circle*{2}}
  \put(60,0){\circle*{2}}
  \put(10,-20){\circle*{2}}
  \put(0,0){\line(1,-2){10}}
  \put(20,0){\line(-1,-2){10}}
 \end{picture}
\ , \quad
  \begin{picture}(60,40)(0,0)
  \put(0,0){\circle*{2}}
  \put(20,0){\circle*{2}}
  \put(40,0){\circle*{2}}
  \put(60,0){\circle*{2}}
  \put(10,20){\circle*{2}}
  \put(10,40){\circle*{2}}
  \put(00,0){\line(1,2){10}}
  \put(20,0){\line(-1,2){10}}
  \put(10,20){\line(0,1){20}}
 \end{picture}
\ , \quad
 \begin{picture}(60,0)(0,0)
  \put(0,0){\circle*{2}}
  \put(20,0){\circle*{2}}
  \put(40,0){\circle*{2}}
  \put(60,0){\circle*{2}}
  \put(10,-20){\circle*{2}}
  \put(10,-40){\circle*{2}}
  \put(0,0){\line(1,-2){10}}
  \put(20,0){\line(-1,-2){10}}
  \put(10,-40){\line(0,1){20}}
 \end{picture} \ ,
 \quad
 \begin{picture}(60,25)(0,0)
  \put(0,0){\circle*{2}}
  \put(20,0){\circle*{2}}
  \put(40,0){\circle*{2}}
  \put(60,0){\circle*{2}}
  \put(10,20){\circle*{2}}
  \put(50,20){\circle*{2}}
  \put(0,0){\line(1,2){10}}
  \put(20,0){\line(-1,2){10}}
  \put(40,0){\line(1,2){10}}
  \put(60,0){\line(-1,2){10}}
 \end{picture} \ ,
\quad
\begin{picture}(60,0)(0,0)
  \put(0,0){\circle*{2}}
  \put(20,0){\circle*{2}}
  \put(40,0){\circle*{2}}
  \put(60,0){\circle*{2}}
  \put(10,-20){\circle*{2}}
  \put(50,-20){\circle*{2}}
  \put(0,0){\line(1,-2){10}}
  \put(20,0){\line(-1,-2){10}}
  \put(40,0){\line(1,-2){10}}
  \put(60,0){\line(-1,-2){10}}
 \end{picture}
 \ .
\end{gather*}
\end{theorem}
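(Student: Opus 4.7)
The plan is to establish the characterization in two directions: construct explicit essential representations for each of the seven listed posets, and rule out essentiality for every other union of two unitarily one-parameter posets. The poset $(1,1,1,1)$ is handled by Section~\ref{sec:four}: the two-dimensional continuous series \eqref{eq:4tuple-twodim}--\eqref{eq:4tuple-twodim2} at a generic parameter $c$ gives an essential representation. For posets~2, 3, 6 and 7 (a single extra chain element attached to the core on one side, possibly symmetrically), one extends a two-dimensional essential representation of the embedded $(1,1,1,1)$ by assigning each extra element a rank-one projection that strictly dominates (or is strictly dominated by) the two core projections on its side; a suitable character $\alpha$ makes the resulting family orthoscalar. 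For posets~4 and 5, which contain a chain of length two, a three-dimensional irreducible representation analogous to \eqref{eq:4tuple-2lplus1dim} (with $l=1$) works: the two chain elements are represented by nontrivial projections of ranks one and two, and the four core elements lie in the two-dimensional block.

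For the non-existence direction, suppose $S\ni g\mapsto P_g$ is an irreducible essential orthoscalar representation, and set $A_1, A_2$ as in \eqref{eq:a1a2}, so that $A_1+A_2=I$. By Theorem~\ref{th:main}, $H$ is finite-dimensional, and as in its proof $H$ is spanned by joint eigenvectors of $A_1, A_2$. The restriction of the representation to the subalgebra generated by $\{P_g : g\in S_i\}$ decomposes as a direct sum of irreducible Hilbert representations of the one-parameter poset $S_i$, each one- or two-dimensional by Proposition~\ref{prop:reps-noos}. In a two-dimensional component, chain elements above (resp.\ below) the core of $S_i$ act as $I$ (resp.\ $0$). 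In a one-dimensional component they take values in $\{0,1\}$ with exactly one transition along the chain. Since essentiality demands $P_g\ne P_h$ whenever $g<h$ are consecutive in a chain, some one-dimensional component must realize the transition between them.

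The main obstacle is to show that this transition pattern cannot be accommodated when $S_1$ or $S_2$ has a chain of length $\ge 3$ on one side, has chains on both sides of its core, or when the chains of $S_1$ and $S_2$ point in opposite directions. The spectral ladder \eqref{eq:ds}--\eqref{eq15} with step $\Lambda=\Sigma_1+\Sigma_2-2$ passes only through finitely many discrete points of $\Delta_1$ and $\Delta_2$ before terminating; each additional required chain transition consumes spectral capacity and constrains $\Lambda$ in an increasingly rigid way. Concretely, the one-dimensional components of the restriction contribute specific discrete eigenvalues of $A_i$ lying in $\Delta_{d,i}$, and for orthoscalarity these must match across $A_1+A_2=I$. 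Exploiting the duality $P_g\mapsto I-P_g$ with modified character $\alpha/(\sum\alpha-1)$ to halve the casework, one checks directly that the matching is compatible only for the seven listed pairs $(S_1,S_2)$; in every other configuration at least one projection is forced to be $0$, $I$, or equal to the projection of a comparable element, contradicting essentiality.
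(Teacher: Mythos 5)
Your necessity argument sets up the right framework (the operators $A_1,A_2$ of \eqref{eq:a1a2}, finite dimensionality from Theorem~\ref{th:main}, restriction to each $S_i$ and decomposition into one- and two-dimensional blocks, with chain transitions only possible in one-dimensional blocks), but the decisive step is missing: you never bound the number and location of the one-dimensional blocks. In the paper this comes from the two endpoints of the eigenvalue ladder \eqref{eq:ds}: for $\Lambda\ne0$ the ladder has exactly two discrete ends, and according to the parity of $\dim H$ either both ends lie in the discrete part of $\Delta_1$ (so the restriction to $S_2$ has \emph{no} one-dimensional block, forcing $S_2=(1,1)$, and the restriction to $S_1$ has exactly two), or one end lies in each $\Delta_{i,d}$ (one one-dimensional block per side). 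Only this count, followed by the case analysis of the values of $P_{g_1}+P_{g_2}$ in the one-dimensional blocks, limits the chains to length at most two on one side, or one element on each of $S_1$ and $S_2$. Your phrase ``each additional chain transition consumes spectral capacity \dots one checks directly that the matching is compatible only for the seven listed pairs'' is exactly the content that has to be proved, and it is not a routine check: the poset $a_8$ (one element above and one element below the incomparable pair of $S_1$, with $S_2=(1,1)$) \emph{passes} the block-counting test (case of one transition up and one down) and is eliminated in the paper only by an extra inequality along the ladder, $\lambda_{2n+1}\le\Sigma_1-\Lambda<\alpha_g+\alpha_{g_1}+\alpha_{g_2}$, showing the top discrete eigenvalue can never be reached. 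Your sketch gives no mechanism that would exclude $a_8$ while keeping $a_6$, so the classification does not follow. The case $\Lambda=0$ (where $(2A_1-\Sigma_1)^2$ is scalar and dimensions are at most two) also needs to be treated separately, as in the paper.

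The existence half you add is not required by the statement (the paper defers it to Section~\ref{sec:examples}), and as sketched it fails on dimension grounds: in a two-dimensional space a rank-one projection cannot strictly dominate two distinct nonzero projections $P_{g_1},P_{g_2}$, so no two-dimensional essential representation of $a_2$, $a_4$ or their duals exists; likewise a three-dimensional space cannot carry $P_1,P_2\le P_5\le P_6$ with all of them nontrivial and distinct, so your $l=1$ construction for $a_6$ cannot work. The paper's examples use higher-dimensional representations built from the ladder with carefully tuned characters (e.g.\ $\alpha_1=\dots=\alpha_4=1/2+\epsilon$ and a small $\alpha_5$, $\alpha_6$), and some construction of this kind is unavoidable.
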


\begin{proof}
 We keep the notations used in the previous Section. First consider the case where $\Lambda=0$.
  Then any irreducible representation is one or two-dimensional. If $S$ possesses two elements $g<h$,
   then either $P_h=I$, or $P_g=0$, or $P_g=P_h$, i.e. irreducible orthoscalar representation is not essential.
    Therefore, for $\Lambda=0$, the only poset with essential irreducible representations
     is  $S =(1,1,1,1)$, the poset considered in Section~\ref{sec:four}.

>From now on, we assume $\Lambda\ne 0$. In this case we can assume that $\sigma(A_1)$ contains an eigenvalue $\lambda_0$ in the discrete part of $\Delta_1$. Then the argument used in the proof of Theorem~\ref{th:main} implies that there can be the following two possibilities.

(i). Dimension  $\dim H = n+1$ is even, $\lambda_n$ lies in the discrete part of $\Delta_1$, other eigenvalues $\lambda_1$, \dots, $\lambda_{n-1}$ lie in the continuous part of $\Delta_1$, all $\mu_k$, $k=0$, \dots, $n$, lie in the continuous part of $\Delta_2$.

(ii). Dimension  $\dim H = n+1$ is odd, eigenvalues $\lambda_1$, \dots, $\lambda_{n}$ lie in the continuous part of $\Delta_1$, eigenvalue $\mu_n$ lies in the discrete part of $\Delta_2$, all other $\mu_k$, $k=0$, \dots, $n-1$ lie in the continuous part of $\Delta_2$.

Consider the case (i). Let $h_1$, $h_2$ be a (unique) pair of
incomparable elements of $S_2$. Since all $\mu_k$, $k=0$, \dots,
$n$, lie in the continuous part of $\Delta_2$, we see from the
structure theorem for a pair of projections $P_{h_1}$, $P_{h_2}$,
that $P_h=0$ for any $h<h_1$, $h<h_2$, and $P_h=I$ for any
$h>h_1$, $h>h_2$. Therefore, an essential irreducible orthoscalar
representation of $S$ exists in even dimension only if $S_2$
consists of two incomparable points, $S_2=(1,1)$.

For the set $S_1$, we have the following. Let $g_1$, $g_2$ be a (unique) pair of incomparable elements in $S_1$. By the structure theorem for a pair of projections $P_{g_1}$, $P_{g_2}$ we decompose
\[
H = \mathbb C^1\oplus \mathbb C^2\oplus\dots\oplus \mathbb C^2 \oplus \mathbb C^1
\]
into invariant with respect to $P_g$, $g\in S_1$, irreducible subspaces. Then
\begin{align}\label{eq:pgh}
P_g& = \delta_1 \oplus I_2 \oplus\dots\oplus I_2 \oplus \delta_2, \quad g>g_1, g>g_2,
\\
\notag P_h& = \delta_3 \oplus 0_2 \oplus\dots\oplus 0_2 \oplus
\delta_4, \quad h<g_1, h<g_2,
\end{align}
 where $\delta_1,\delta_2,\delta_3,\delta_4\in \{0,1\}$. Moreover, to exclude the cases $P_g=I$ and $P_h=0$
 we assume that $\delta_1+\delta_2 < 2$, $\delta_3+\delta_4 > 0$. In each of the two invariant  one-dimensional blocks, the sum $P_{g_1}+P_{g_2}$ can take values 0, 1, or 2, and the following cases can arise.

1). In the both blocks the sum is 0. Then for $P_g$ in \eqref{eq:pgh} we can have  $\delta_1+\delta_2 =0$, or $\delta_1+\delta_2 =1$,
thus there can be at most two different nontrivial projections $P_{g_3}$, $P_{g_4}$, $g_3>g_1$, $g_3>g_2$, $g_4>g_3$. For $P_h$ we  have $\delta_3=\delta_4 =0$, thus there are no nonzero $P_h$.

2). In one block the sum is 0, and in the other one it is 1.  Then for $P_g$ in \eqref{eq:pgh} we have  $\delta_1+\delta_2 =1$, and there can be at most one nontrivial projection $P_{g_3}$,  $g_3>g_1$, $g_3>g_2$. For $P_h$ we again have $\delta_3=\delta_4 =0$, thus there are no nonzero $P_h$.

3). In the both blocks the sum is 1. Then for $P_g$ in \eqref{eq:pgh} we have  $\delta_1+\delta_2 =2$, and for $P_h$ we have $\delta_3=\delta_4 =0$, thus there are no $P_g\ne I$, $P_h\ne0$.

4). In the first block the sum is 0, in the second one the sum is 2, or in the first block the sum is 2,  in the second one the sum is 0. For $P_g$ in \eqref{eq:pgh} we have  $\delta_1+\delta_2 =1$, and for $P_h$ we have $\delta_3+\delta_4 =1$.   There can be at most one nontrivial projection $P_{h}$, $h<g_1$, $h<g_2$, and at most one nontrivial projection $P_{g}$, $g>g_1$, $g>g_2$.

5) In one block the sum is 2, in the other one the sum is 1. Then similarly to the case 2 there can be at most one nontrivial projection  $P_{h}$, $h<g_1$, $h<g_2$. For any $g>g_1$, $g>g_2$ we have $P_g=0$.

6) In the both blocks the sum is 2. Then similarly to the case 1 there can be at most two different nontrivial projections $P_{h_1}$, $P_{h_2}$, $h_1<g_1$, $h_1<g_2$, $h_2< h_1$. For any $g>g_1$, $g>g_2$ we have $P_g=0$.

Therefore, an essential irreducible orthoscalar representation of even dimension can exist only for the following posets (we use the notation from \cite{otr})
\begin{gather*}
a_1\ \setlength{\unitlength}{0.2mm}
 \begin{picture}(60,25)(0,0)
  \put(0,0){\circle*{2}}
  \put(20,0){\circle*{2}}
  \put(40,0){\circle*{2}}
  \put(60,0){\circle*{2}}
 \end{picture}
\ , \quad
a_2\
 \begin{picture}(60,25)(0,0)
  \put(0,0){\circle*{2}}
  \put(20,0){\circle*{2}}
  \put(40,0){\circle*{2}}
  \put(60,0){\circle*{2}}
  \put(10,20){\circle*{2}}
  \put(0,0){\line(1,2){10}}
  \put(20,0){\line(-1,2){10}}
 \end{picture}
\ , \quad
a_6 \  \begin{picture}(60,45)(0,0)
  \put(0,0){\circle*{2}}
  \put(20,0){\circle*{2}}
  \put(40,0){\circle*{2}}
  \put(60,0){\circle*{2}}
  \put(10,20){\circle*{2}}
  \put(10,40){\circle*{2}}
  \put(00,0){\line(1,2){10}}
  \put(20,0){\line(-1,2){10}}
  \put(10,20){\line(0,1){20}}
 \end{picture}
\ , \quad
a_8\ \begin{picture}(60,25)(0,0)
  \put(0,0){\circle*{2}}
  \put(20,0){\circle*{2}}
  \put(40,0){\circle*{2}}
  \put(60,0){\circle*{2}}
  \put(10,20){\circle*{2}}
  \put(10,-20){\circle*{2}}
  \put(0,0){\line(1,2){10}}
  \put(20,0){\line(-1,2){10}}
  \put(0,0){\line(1,-2){10}}
  \put(20,0){\line(-1,-2){10}}
 \end{picture} \ ,
\end{gather*}
and the posets dual to $a_2$ and $a_6$
\[
\vrule width0pt height0pt depth 8mm
\setlength{\unitlength}{0.2mm}
\begin{picture}(60,0)(0,0)
  \put(0,0){\circle*{2}}
  \put(20,0){\circle*{2}}
  \put(40,0){\circle*{2}}
  \put(60,0){\circle*{2}}
  \put(10,-20){\circle*{2}}
  \put(0,0){\line(1,-2){10}}
  \put(20,0){\line(-1,-2){10}}
 \end{picture}
\ , \quad
 \begin{picture}(60,0)(0,0)
  \put(0,0){\circle*{2}}
  \put(20,0){\circle*{2}}
  \put(40,0){\circle*{2}}
  \put(60,0){\circle*{2}}
  \put(10,-20){\circle*{2}}
  \put(10,-40){\circle*{2}}
  \put(0,0){\line(1,-2){10}}
  \put(20,0){\line(-1,-2){10}}
  \put(10,-40){\line(0,1){20}}
 \end{picture} \ .
\]
We show that the $a_8$ poset arising in the case 4  above is not in fact essential, i.e., any its irreducible orthoscalar representation is not essential.
Indeed, in the case 4 above assume that in the first one-dimensional block $P_{g_1}=P_{g_2}=0$, and in the second one $P_{g_1}=P_{g_2}=1$, then $\delta_1 =\delta_3=0$, $\delta_2=\delta_4=1$, and in essential representation  $\sigma(A_1)$ contains $0$ and $\alpha_g +\alpha_{g_1}+\alpha_{g_2} +\alpha_h$. Then the sequence \eqref{eq:ds} is
\begin{align*}
 \lambda_0&=0 \to \mu_0 =1 \to \mu_1 = \Sigma_2 -1 \to \lambda_1 =2-\Sigma_2\to \lambda_2 =\Lambda \to \dots
\\
&\to \mu_{2n+1} =\Sigma_2 -1 +n \Lambda \to \lambda_{2n+1} =
\alpha_g +\alpha_{g_1}+\alpha_{g_2} +\alpha_h.
\end{align*}
Since we have already shown that an essential  orthoscalar
representation of the $a_8$ poset must  have dimension more than
2, we conclude that $\lambda_2\in \sigma(A_1)$ and therefore
$\Lambda>0$. On the other side, since $\lambda_{2k+1} = \Sigma_1
-(k+1)\Lambda$, we have $\lambda_1>\lambda_3>\dots
>\lambda_{2n+1}$, therefore $\lambda_{2n+1}\le \lambda_1=\Sigma_1
-\Lambda<\alpha_g +\alpha_{g_1}+\alpha_{g_2}$.

In the case (ii) of odd dimension, similar arguments lead to the following posets:

\begin{gather*}
a_1\ \setlength{\unitlength}{0.2mm}
 \begin{picture}(60,25)(0,0)
  \put(0,0){\circle*{2}}
  \put(20,0){\circle*{2}}
  \put(40,0){\circle*{2}}
  \put(60,0){\circle*{2}}
 \end{picture}
\ , \quad
a_4\
 \begin{picture}(60,25)(0,0)
  \put(0,0){\circle*{2}}
  \put(20,0){\circle*{2}}
  \put(40,0){\circle*{2}}
  \put(60,0){\circle*{2}}
  \put(10,20){\circle*{2}}
  \put(50,20){\circle*{2}}
  \put(0,0){\line(1,2){10}}
  \put(20,0){\line(-1,2){10}}
  \put(40,0){\line(1,2){10}}
  \put(60,0){\line(-1,2){10}}
 \end{picture} \ ,
\end{gather*}
and the poset dual to $a_4$
\[
\vrule width0pt height0pt depth 8mm
\setlength{\unitlength}{0.2mm}
\begin{picture}(60,0)(0,0)
  \put(0,0){\circle*{2}}
  \put(20,0){\circle*{2}}
  \put(40,0){\circle*{2}}
  \put(60,0){\circle*{2}}
  \put(10,-20){\circle*{2}}
  \put(50,-20){\circle*{2}}
  \put(0,0){\line(1,-2){10}}
  \put(20,0){\line(-1,-2){10}}
  \put(40,0){\line(1,-2){10}}
  \put(60,0){\line(-1,-2){10}}
 \end{picture}
 \ .
\qedhere
\]
\end{proof}
In the following section we will show that all posets listed in
Theorem~\ref{th:ess} admit essential irreducible orthoscalar
representations, and therefore they are essential ones.

\section{Examples}\label{sec:examples}
As was shown above, the essential posets that are unions of two
unitarily one-parameter posets may have $4$, $5$ or $6$ elements.
Let $S=S_1\cup S_2$ be one of them with $S_1\cap S_2=\emptyset$.
Obviously $S$ and $S_2\cup S_1$ are isomorphic. Therefore we
consider only the posets in which $S_1$ has two comparable
elements. The operators $A_1$ and $A_2$ will be defined by the
formula (\ref{eq:a1a2}), where $P_g$ is the orthoprojection
corresponding to the element $g\in S$ in an essential orthscalar
irreducible representation of $S$.  Note that in all posets $a_2$,
$a_4$ and $a_6$ below the element $g_5$ (or $g_6$) is the maximal
element of $S_1$. Whence the ranges of $A_1$ and $P_5$ (or $P_6$)
coincide. Thus if $A_1$ is invertible, then $P_5=I$ (or $P_6=I$)
and hence the representation is not essential. Therefore $A_1$ is
singular for every essential representation of $a_2$, $a_4$,
$a_6$. Let $\Lambda = \Sigma_1+\Sigma_2 -2$.

1) \emph{Representations of $a_2$.} Let $a_2=S_1\cup S_2$,
$S_1=\{g_1,g_2,g_5\mid g_5>g_1,g_5>g_2\}$, $S_2=\{g_3,g_4\}$ and
orthoprojections $P_1,\dots , P_5$ form an essential orthoscalar
representation of $a_2$  with character $
\alpha=(\alpha_1,\alpha_2,\alpha_3,\alpha_4,\alpha_5).$ The
operator $A_1$ is singular so there exists $f_0$ such that $A_1
f_0=\lambda_0 f_0=0$. According to Theorem \ref{th:main},  the
sequence $\lambda_i$ obtained by the formulas (\ref{eq15})
consists of eigenvalues of $A_1$ and the sequence $\mu_j$ consists
of eigenvalues of $A_2$ for $i=0,1,\dots , m'$ and $j=0,1,\dots ,
m'$ with some positive $m'$. Let $f_0$, $f_1,\dots$, $f_{m'}$ be
the corresponding eigenvectors. This consequence can be obtain
from $f_0$ using special linear combinations of $P_i$. Let
$D_1(x)=P_1 + \phi(x) P_2$, $D_2(x)=P_3 + \psi(x) P_4$, where
\[
\phi(x)=\frac{\alpha_2(\lambda-\alpha_2-\alpha_5)}
{\alpha_1(\alpha_1+\alpha_5-\lambda)},\quad
\psi(x)=\frac{\alpha_4(\lambda-\alpha_4)}
{\alpha_3(\alpha_3-\lambda)}.
\]
Then $f_1=D_2(\mu_0)$, $f_{2j}=D_1(\lambda_{2j-1}),$
$f_{2j+1}=D_2(\mu_{2j})$. Assume that
\begin{equation}\label{eq30}\lambda_{2i} \in
(\alpha_5,\alpha_1+\alpha_5) \cup
(\alpha_2+\alpha_5,\alpha_1+\alpha_2+\alpha_5)
 \end{equation}
and
\begin{equation}\label{eq31}\mu_{2j-1} \in
(0,\alpha_3) \cup (\alpha_4,\alpha_3+\alpha_4),
\end{equation}
where $i,j=1,\dots, m$. There exist only two cases in which the
representation can be reconstructed: $\lambda_{2m+1} \in
\{\alpha_5,\alpha_1+\alpha_5,\alpha_2+\alpha_5\}$ with $m'=2m+1$
or $\mu_{2m} \in \{0,\alpha_3,\alpha_4\}$ with $m'=2m$. We
consider both cases in details.

({\textbf{i}}) $\lambda_{2m+1} \in
\{\alpha_5,\alpha_1+\alpha_5,\alpha_2+\alpha_5\}$. The subspace
\begin{equation}\label{eq32}\mbox{span}(f_0, f_1,f_2,
\dots, f_{2m+1})
\end{equation}
 is invariant
under the act of $P_1,\dots,P_5$ for $\alpha_1\ne \alpha_2$.
Operators $P_1$, $P_2,\dots,$ $P_5$ are restored up to unitary
equivalence: $P_5=0\oplus I_{2m}$,  $P_1,\dots,$ $P_4$ have the
form (\ref{eq:4tuple-2ldim}) with $\delta_1=\delta_3=0$, and
$\delta_2=\delta_4=0$ if $\lambda_{2m+1}=\alpha_5$ or
\[\delta_2=1-\delta_4=\left\{\begin{array}{l} 1, \mbox{ if }
\lambda_{2m+1}=\alpha_5+\alpha_1,\\
0, \mbox{ if } \lambda_{2m+1}=\alpha_5+\alpha_2.
\end{array}\right. \]
The parameters $p_j$, $q_j$, $r_j$, $s_j$ are calculated by
$\lambda_i$ and $\mu_i$ after the substitution
$\lambda_i=\lambda_i-\alpha_5$.

Let now $\alpha_1=\alpha_2$, then $\lambda_{2m+1}=\alpha_1+\alpha_5=\alpha_2+\alpha_5$.
Note, that
\[A_1f_{2m+1}=(\alpha_1+\alpha_5)f_{2m+1}=
(\alpha_1+\alpha_5)(P_1+P_2)f_{2m+1}.\]
 So $f'_{2m+1}=P_1f_{2m+1}$ is an eigenvector of $A_1$ too.
 Therefore we get two non-equivalent representations of $a_2$ with
 the same formulas on $P_i$ as above except the relation
\[\delta_2=1-\delta_4=\left\{\begin{array}{l} 1, \mbox{ if }
f'_{2m+1}=f_{2m+1},\\
0, \mbox{ if } f'_{2m+1}=0.
\end{array}\right. \]

The vector $f'_{2m+1}$ must be a multiple of $f_{2m+1}$ since
otherwise $f'_{2m+1}$ does not belong to the subspace (\ref{eq32})
and hence we have a new eigenvector of $A_1$ and the elements of
the sequence $\lambda_i$ are eigenvalues of $A_1$ with $i>2m+1$.
It is easy to see then that $\lambda_{2m+i}=\lambda_{2m+3-i}$ and
$\mu_{2m+i}=\mu_{2m+3-i}$. So if $\lambda_{2m_2+1}=\alpha_5$ or
$\mu_{2m_2}\in \{0,\alpha_3,\alpha_4\}$, then the relation
(\ref{eq30}) or (\ref{eq31}) does not hold. If at last
$\lambda_{2m_2+1}=0$, then the subspace

\begin{align*}\label{eq:333}
\mbox{span}(f'_{2m+1},
 D_2(\mu_{2m+2})f'_{2m+1}, D_1(\lambda_{2m+3})D_2(\mu_{2m+2})f'_{2m+1},
 \dots, \\
\notag D_2(\mu_{2m_2})D_1(\lambda_{2m_2-1})\dots
D_2(\mu_{2m+2})f'_{2m+1})
\end{align*}

 is invariant
under the act of $P_1,\dots,P_5$ and so the representation is
reducible.

({\textbf{ii}}) $\mu_{2m} \in \{0,\alpha_3,\alpha_4\}$. The
subspace
\begin{equation}\label{eq34}\mbox{span}(f_0, f_1,f_2,
\dots,  f_{2m})
\end{equation}
 is invariant
under the act of $P_1,\dots,P_5$ for $\alpha_3\ne \alpha_4$.
Operators $P_1$, $P_2,\dots,$ $P_5$ are restored up to unitary
equivalence and has the form (\ref{eq:4tuple-2lplus1dim}) with
$\delta_1=\delta_2=0$, and $\delta_3=\delta_4=0$ if $\mu_{2m}=0$
or
\[\delta_3=1-\delta_4=\left\{\begin{array}{l} 1, \mbox{ if }
\mu_{2m}=\alpha_3,\\
0, \mbox{ if } \mu_{2m}=\alpha_4,
\end{array}\right. \]
and $P_5=0\oplus I_{2m}$. Note that the parameters $p_j$, $q_j$,
$r_j$, $s_j$ are calculated here also after the substitution
$\lambda_i=\lambda_i-\alpha_5$.

As above in ({\textbf{i}}), we get two different irreducible
representations for $\mu_{2m}=\alpha_3=\alpha_4$. The formulas for
$P_i$ are the same except the relation
\[\delta_3=1-\delta_4=\left\{\begin{array}{l} 1, \mbox{ if }
f'_{2m}=f_{2m},\\
0, \mbox{ if } f'_{2m}=0,
\end{array}\right. \]
where $f'_{2m}=P_3f_{2m}$. The vector $f'_{2m}$ must be a multiple
of $f_{2m}$ since otherwise we obtain an invariant subspace with
smaller dimension or the violation in (\ref{eq30}) or
(\ref{eq31}). The proof of the fact is similar and we leave it to
the reader.

Thus we proved that with fixed coefficients $\alpha_i$ there exist
at most two non-equivalent essential representations of $a_2$. The
most simple way to construct the examples is to put
$\alpha_1=\alpha_2=\alpha_3=\alpha_4=1/2+\epsilon$,
$\alpha_5=1/(2m+5)-2\epsilon + 8\epsilon/(2m+5)$. Then
$\lambda_{2m+1}=\alpha_5$, (\ref{eq30}) and (\ref{eq31}) hold for
small irrational $\epsilon$ and hence we obtain one essential
representation of $a_2$ of dimension $2m$.  If we put
$\alpha_5=1/(4m)-2\epsilon -3\epsilon/(2m)$, then
$\lambda_{2m+1}=\alpha_1$ and we have two non-equivalent
representations of $a_2$ of dimension $2m$. It easy to see that if
$\alpha_5=1/(4m)-2\epsilon-\epsilon/(2m)$, then
$\mu_{2m}=1/2+\epsilon=\alpha_3$, that is we have two essential
representations of $a_2$ of the dimension $2m+1$ in this case. For
last case we put $\alpha_5=1/(2m)-2\epsilon$, then $\mu_{2m}=0$
and for small irrational $\epsilon$, the poset $a_2$ has the only
one up to unitary equivalence essential representation in the
dimension $2m+1$.

2) \emph{Representations of $a_4$.} Let $a_4=S_1\cup S_2$,
$S_1=\{g_1,g_2,g_5\mid g_5>g_1,g_5>g_2\}$, $S_2=\{g_3,g_4,g_6\mid
g_6>g_3,g_6>g_4 \}$ and orthoprojections $P_1,\dots, P_6$ form an
essential orthoscalar representation of $a_4$  with character $
\alpha=(\alpha_1,\alpha_2,\alpha_3,\alpha_4,\alpha_5,\alpha_6).$
The operator $A_1$ is singular so we can construct the sequences
$\lambda_i$ and $\mu_j$ of eigenvalues of $A_1$ and $A_2$ as we
did for the representations of $a_2$. Note that $A_2 $ is also
singular since otherwise $P_6$ will be the identity matrix.
Therefore there exist $m$ such that (\ref{eq30}) and (\ref{eq31})
hold for every $i,j=0,\dots,m$ and $\mu_{2m}=0$. Whence there
exists only one up to unitary equivalence representation. The
operators $P_i$ are restored by the formulas
(\ref{eq:4tuple-2lplus1dim}) where $\delta_i=0$, the parameters
$p_j$, $q_j$, $r_j$, $s_j$ are calculated by $\lambda_i$ and
$\mu_i$ after two substitutions $\lambda_i=\lambda_i-\alpha_5$ and
$\mu_i=\mu_i-\alpha_6$, $P_5=0\oplus I_{2m}$ and $P_6=I_{2m}\oplus
0$.

To find an appropriate character we set
\begin{equation}\label{eq35}
\alpha_1=\alpha_2=\alpha_3=\alpha_4=1/2+\epsilon,\
\alpha_5=\epsilon/2. \end{equation}
 Then the relation $\mu_{2m}=0$
yields $\alpha_6=1/(2m)-5\epsilon/2$. The inclusions (\ref{eq30})
and (\ref{eq31}) hold for small $\epsilon$.

3) \emph{Representations of $a_6$.} Let $a_6=S_1\cup S_2$,
$S_1=\{g_1,g_2,g_5,g_6\mid g_6>g_5>g_1, g_5>g_2\}$,
$S_2=\{g_3,g_4\}$ and orthoprojections $P_1,\dots P_6$ form an
essential orthoscalar representation of $a_6$  with character $
\alpha=(\alpha_1,\alpha_2,\alpha_3,\alpha_4,\alpha_5,\alpha_6).$
The operator $A_1$ is singular, hence we have again the sequences
$\lambda_i$ and $\mu_j$. If $\lambda_{2m+1}\ne \alpha_6$ for every
$m$, then $P_6=P_5$. Really, the operator $A_1$ is a sum of four
nonnegative operators and if $\alpha_6\notin \sigma(A_1)$, then
every nonzero number of $\sigma(A_1)$ is greater or equal to
$\alpha_5+\alpha_6$. So the ranges of $A_1$, $P_5$ and $P_6$
coincide. Whence $P_5=P_6$.

Thus $\lambda_{2m+1}= \alpha_6$ for some $m>0$ and we have only
one up to unitary equivalence essential representation of $a_6$.
The operator $P_i$, $i=1,\dots, 4$ have the form
(\ref{eq:4tuple-2ldim}) with $\delta_i=0$ and the parameters
$p_j$, $q_j$, $r_j$, $s_j$ calculated by $\lambda_i$ and $\mu_i$
after the substitution $\lambda_i=\lambda_i-\alpha_5-\alpha_6$.
The operator $P_5=0\oplus I_{2m}\oplus 0$ and $P_6=0\oplus
I_{2m+1}$.

To find the character for which the orthoscalar representation
exists we set $\alpha_i$ as in (\ref{eq35}) and
$\alpha_6=1/(2m+1)-(5m+2)\epsilon/(2m+1)$. Then $\lambda_{2m+1}=
\alpha_6$ and for small $\epsilon$ the inclusions (\ref{eq30}) and
(\ref{eq31}) hold.

4) \emph{Representations of dual to $a_i$.} All essential
representations of the posets dual to $a_2$, $a_4$ and $a_6$ can
be calculated from the described representations using duality and
the formulas $Q_g=I-P_g$.

 \vskip 0,2cm
{\it Acknowledgements.} The authors would like to thank the
professors Yu. S. Samo\v \i lenko and  S. A. Kruglyak for helpful
discussions and stimulating suggestions.


\begin{thebibliography}{99}

 \bibitem{naz-roi}
 L. A. Nazarova, A. V. Roiter.
Representations of partially ordered sets. (Russian).
\emph{Investigations on the theory of representations.
Zap. Nauchn. Sem. Leningrad. Otdel. Mat. Inst. Steklov. (LOMI)}
\textbf{28}
(1972),
pp.~5--31.


\bibitem{kleiner}
M. M. Kleiner.
Partially ordered sets of finite type. (Russian).
\emph{Investigations on the theory of representations.
Zap. Nauchn. Sem. Leningrad. Otdel. Mat. Inst. Steklov. (LOMI)}
\textbf{28}
(1972),
pp.~32--41.

\bibitem{kostya_brasil}
V. M. Bondarenko, V. Futorny, T. Klimchuk, V. V. Sergeichuk, K. Yusenko.
Systems of subspaces of a unitary space.
\emph{Linear Algebra Appl.}
\textbf{438}
(2013),
no. 5,
pp.~2561--2573.

\bibitem{krs_2002}
{S. A. Kruglyak, V. I. Rabanovich, Yu. S. Samoilenko},   
{On sums of projections}, 
\emph{  Funct. Anal. Appl.} 
\textbf{36} 
{(2002)},   
{no.~3},    
pp.~{182--195 }. 

\bibitem{kru_roi}
S. A. Kruglyak, A. V. Roiter.
Locally scalar representations of graphs in the category of Hilbert spaces. (Russian)
\emph{Funktsional. Anal. i Prilozhen.}
\textbf{39}
(2005),
no. 2,
pp.~13--30;
translation in
\emph{Funct. Anal. Appl.} \textbf{39} (2005), no. 2, pp.~91--105

\bibitem{alb_ost_sam}
{S. Albeverio, V. Ostrovskyi, Yu. Samoilenko},   
{On functions on graphs and representations of a certain class of $*$-algebras}, 
\textit{J. Algebra.} 
\textbf{308},
{(2007)},   
{567--582}. 

\bibitem{kostya_uni}
T. Weist, K. Yusenko.
Unitarizable Representations of Quivers.
\emph{Algebr. Represent. Theory}
\textbf{16}
(2013),
no.~5,
pp.~1349--1383.

\bibitem{kostya_uni2}
Yu. Samoilenko, K. Yusenko.
Kleiner's theorem for unitary representations of posets.
\emph{Linear Algebra Appl.}
\textbf{437}
 (2012),
no.~2,
pp.~581--588.


\bibitem{ost-sam99}
{V. Ostrovskyi, Yu. Samoilenko},   
\emph{Introduction to the Theory of
Representations of Finitely Presented $*$-Algebras. I. Representations by Bounded Operators}, 
{Rev. Math. Math. Phys.} 
\textbf{11} 
{Harwood Academic Publishers},    
{Amsterdam},     
{1999}.     

\bibitem{kyrych}
A. A. Kirichenko.
On linear combinations of orthoprojectors,
\emph{Uch. Zap. Tavrich. Univ.}
\textbf{54} (2002),
no. 2,
pp.~31--39.

\bibitem{yusenko-four}
K. A. Yusenko.
On quadruples of projectors connected by a linear relation. (Ukrainian)
\emph{Ukrain. Mat. Zh.}
\textbf{58}
(2006),
 no. 9,
pp.~1289--1295;
translation in
\emph{Ukrainian Math. J.} \textbf{58} (2006), no. 9, pp.~1462--1470.


\bibitem{ost-sam06}
{V. L. Ostrovskyi and Yu. S. Samoilenko}, 
{ On spectral theorems for families of linearly connected
selfadjoint operators with prescribed spectra associated with
extended Dynkin graphs} (Ukrainian). 
\emph { Ukrain  Mat. Zh. } 
\textbf{58} 
 (2006), 
  {no.~11,} 
 pp.~{1556--1570}. 

\bibitem{ost05}
V. Ostrovskyi.
On $∗$-representations of a certain class of algebras related to a graph.
\emph{Methods Funct. Anal. Topology}
\textbf{11}
(2005),
no. 3,
pp.~250--256.

\bibitem{otr}
V.V.Otrashevskaya. On representations of one-parameter partially
ordered sets, in "Matrix problems" Inst. Mat. Akad. Nauk Ukrain.
SSR, Kiev (1977), pp.~144-149.

\bibitem{kru_naz_roi}
S. A. Kruglyak,  L. A. Nazarova, A. V. Roiter.
Orthoscalar representations of quivers in the category of Hilbert spaces.
\emph{J. Math. Sci.}
 \textbf{145}
(2007),
no. 1,
pp.~4793--4804.

\end{thebibliography}
\end{document}